\newtheorem{theorem}{Theorem}
\newtheorem{corollary}[theorem]{Corollary}
\newtheorem{definition}[theorem]{Definition}
\newtheorem{example}[theorem]{Example}
\newtheorem{remark}[theorem]{Remark}
\newtheorem{lemma}[theorem]{Lemma}
\newtheorem{proposition}[theorem]{Proposition}
\newenvironment{proof}[1][Proof]{\textbf{#1.} }{\ \rule{0.5em}{0.5em}}
\newcommand{\F}{\mathbb{F}_q}
\newcommand{\FF}{\mathbb{F}}
\newcommand{\Q}{\mathbb{Q}}
\newcommand{\PGL}{{\mathrm{PGL}}}
\begin{document}

\title{Explicit equations for Drinfeld modular towers}
\author{Alp Bassa and Peter Beelen}
\date{}
\maketitle
\abstract{Elaborating on ideas of Elkies, we show how recursive equations for towers of Drinfeld modular curves $(X_0(P^n))_{n\ge 0}$ for $P\in \mathbb F_q[T]$ can be read of directly from the modular polynomial $\Phi_P(X,Y)$ and how this naturally leads to recursions of depth two. Although the modular polynomial $\Phi_T(X,Y)$ is not known in general, using generators and relations given by Schweizer, we find unreduced recursive equations over $\mathbb F_q(T)$ for the tower $(X_0(T^n))_{n\ge 2}$ and of a small variation of it (its partial Galois closure). Reducing at various primes, one obtains towers over finite fields, which are optimal, i.e., reach the Drinfeld--Vladut bound, over a quadratic extension of the finite field. We give a proof of the optimality of these towers, which is elementary and does not rely on their modular interpretation except at one point. We employ the modular interpretation to determine the splitting field of certain polynomials, which are analogues of the Deuring polynomial. For these towers, the particular case of reduction at the prime $T-1$ corresponds to towers introduced by Elkies and Garcia--Stichtenoth.}

\section{Introduction}

The question of how many rational points a curve of genus $g$ defined
over a finite field $\mathbb F_q$ can have, has been a central and
important one in number theory. One of the landmark results in the
theory of curves defined over finite fields was the theorem of Hasse
and Weil, which is the congruence function field analogue of the
Riemann hypothesis. As an immediate consequence of this theorem one
obtains an upper bound for the number of rational points on such a
curve in terms of its genus and the cardinality of the finite field.
It was noticed however by Ihara \cite{ihara} and Manin \cite{manin1}
that this bound can be improved for large genus and the asymptotic
study over a fixed finite field was then initiated by Ihara. An
asymptotic upper bound on the number of rational points was given by
Drinfeld and Vladut \cite{dv}.

Finding curves of large genera with many points is a difficult task
and there have basically been three approaches: class field theory
(see among others \cite{xingnied,serre}), explicit constructions (see
among others \cite{elkies,GSINV,GSJNT,tame}) and reductions of modular
curves of various types (see among others
\cite{ihara,ge2,TVZ,manin}). With these techniques it is possible
to construct sequences of curves having many points compared to their
genera asymptotically and in some cases even attaining the Drinfeld--Vladut bound, in which case the sequence of curves is called optimal.

In \cite{GSJNT}, Garcia and Stichtenoth introduced the following
optimal sequence of function fields $(F_n)_{n \ge 0}$ over $\mathbb F_
\ell$, where $\ell=q^2$ : Let $F_0=\mathbb F_\ell(x_0)$  and define
$F_{n+1}=F_n(x_{n+1})$ where
$$x_{n+1}^q+x_{n+1}=\frac{x_n^q}{x_n^{q-1}+1},$$
for $n\geq 0$.
Because of the way the tower is defined, it is said to be recursively
given by the equation
\begin{equation}\label{eq:jnteq}
y^q+y=\frac{x^q}{x^{q-1}+1}.
\end{equation}
In \cite{elkies,elkiesd}, Elkies gave a modular interpretation for
this and for all other known optimal recursive towers. More precisely
he showed that all known examples of tame, (respectively wild) optimal
recursive towers correspond to reductions of classical (respectively
Drinfeld) modular curves. Moreover, he found several other equations
for such towers, by studying  reductions of Drinfeld-, elliptic- an
Shimura-modular curves very explicitly and gave an explanation for the
recursive nature of these towers. Among other things, he showed that the
reduction of the tower of Drinfeld modular curves $(X_0(T^n))_{n\geq
2}$ at the prime $T-1$ is given recursively by the equation
\begin{equation}\label{eq:elkieseq}
(y+1)^{q-1}\cdot y=\frac{x^q}{(x+1)^{q-1}}.
\end{equation}
This is an optimal tower, which was also studied in detail in
\cite{bezerragarcia}. It is a subtower of the tower defined by (\ref{eq:jnteq}).

In this paper we elaborate further on the ideas of Elkies. We show how
the defining equations for these modular towers can be read of
directly from the modular polynomial, and how this, for higher levels,
leads to recursions of depth 2. With this approach, finding new
towers turns to be an easy task, once the corresponding modular
polynomials are known. To illustrate this, we work out the equations
for the first few cases of Drinfeld modular towers. Unfortunately,
finding the modular polynomials is not an easy task. In fact even the modular polynomial $\Phi_T(X,Y)$ is not known for general $q$. However, in Section \ref{section:two} we find explicit equations for the Drinfeld modular curves $X_0(T^n)$ using a different approach. We
concentrate in this paper on the Drinfeld modular setting. The case of elliptic modular towers can however be treated in exactly the same way.

After having studied towers $(X_0(P^n))_{n\geq 0}$, we investigate the
particular case of the tower $(X_0(T^n)_{n\geq 2}$ more elaborately.
Using expressions from \cite{schweizer}, we work out the defining
equation for this tower (and a variant of it) in unreduced form (over $
\mathbb F_q(T)$). By reducing this modulo various primes other then $T
$, we obtain a whole family (depending on a parameter $\gamma \in
\overline{\mathbb F}_q$) of optimal towers, which contains as particular
cases the towers given by  Equation~(\ref{eq:jnteq}) and Equation~(\ref{eq:elkieseq}). More precisely, we obtain the following theorem (see Remark \ref{rem:optimaltowers1} and Theorem \ref{thm:tower2}):
\begin{theorem}
Let $\mathbb F_q$ be a finite field and $\gamma$ be a nonzero
algebraic element over $\mathbb F_q$. Let $K$ be the quadratic
extension of $\mathbb F_q(\gamma)$ and let the tower $F_0 \subseteq
F_1 \subseteq \ldots$ over $K$, be given by
$F_0=K(x_0)$ and  $F_{n+1}=F_n(x_{n+1})$, with
$$x_{n+1}^q+x_{n+1}=\gamma \cdot \frac{x_n^q}{ x_n^{q-1}+1},$$
for $n\geq 0$ and the tower $E_0 \subseteq E_1 \subseteq \ldots$ over $K
$, be given by
$E_0=K(x_0)$ and  $E_{n+1}=E_n(x_{n+1})$, with
$$(y_{n+1}+1)^{q-1}\cdot y_{n+1}=\gamma^{q-1}\cdot \frac{y_n^q}{(y_n
+1)^{q-1}},$$
for $n\geq 0$.
Then $(F_n)_{n\geq 0}$ and $(E_n)_{n\geq 0}$ are asymptotically
optimal towers over $K$, i.e., attain the Drinfeld--Vladut bound.
\end{theorem}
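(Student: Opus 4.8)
The plan is to establish optimality by the standard criterion for recursive towers, bounding genus and rational points separately. Recall that for a tower $\mathcal F=(F_n)_{n\ge 0}$ one sets $\lambda(\mathcal F)=\lim_{n\to\infty}N(F_n)/g(F_n)$, where $N(F_n)$ is the number of places of degree one and $g(F_n)$ the genus (the limit exists for towers). Writing $\mathbb F_q(\gamma)=\mathbb F_{q^d}$ we have $K=\mathbb F_{q^{2d}}$, so the Drinfeld--Vladut bound over $K$ gives $\lambda(\mathcal F)\le\sqrt{|K|}-1=q^d-1$, and it suffices to prove the reverse inequality. For this I would prove, for the tower $\mathcal F$ given by $x_{n+1}^q+x_{n+1}=\gamma\, x_n^q/(x_n^{q-1}+1)$, two estimates: (i) a genus bound $2g(F_n)-2\le C\,[F_n:F_0]$ for all $n$, with $C$ explicit; and (ii) a points bound $N(F_n)\ge t\,[F_n:F_0]$, coming from a non-empty set of $t$ places of $F_0=K(x_0)$ that split completely in every $F_n$. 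These give $\lambda(\mathcal F)\ge 2t/C$, and the explicit $t$, $C$ satisfy $2t/C=q^d-1$, which forces equality in Drinfeld--Vladut. The tower $\mathcal E$ is then handled in the same way.

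Before the two estimates one checks the routine prerequisites: $[F_{n+1}:F_n]=q$ at every level, and $K$ remains the exact field of constants of every $F_n$ --- the latter because $x_0=\infty$ extends to a place that is totally (wildly) ramified throughout the tower. For (i), the crucial remark is that multiplying the right-hand side by the nonzero constant $\gamma$ affects neither which places ramify nor the pole orders entering the conductor computation, so the ramification analysis coincides with that of the untwisted Garcia--Stichtenoth tower. Concretely: the projection of the plane curve $y^q+y=\gamma\, x^q/(x^{q-1}+1)$ to the $x$-line is unramified away from the poles of the right-hand side (the derivative in $y$ being $1$ in characteristic $p$), so it ramifies only over $x=\infty$ and over the $q-1$ places with $x^{q-1}=-1$; and the finite set $\{\,x=0,\ x=\infty\,\}\cup\{\,x:\ x^{q-1}=-1\,\}$ is closed under the recursion, hence contains every place of $\overline K(x_0)$ lying below a ramified place of any $F_n/F_0$. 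Propagating the ramification through the basic function field $F_1$ by Abhyankar's lemma, together with the local structure of these wild, Artin--Schreier type extensions with bounded conductor, yields the constant $C$. I expect this ramification bookkeeping --- controlling the depth-two recursion and the wild ramification simultaneously at the finitely many bad places --- to be the principal difficulty, although it follows the pattern of the known case $\gamma=1$.

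For (ii), a place $P$ of $F_0$ with $x_0(P)=a\in K$ splits completely throughout $\mathcal F$ exactly when, recursively, the equation producing $x_{n+1}$ from the value $x_n=a_n$ has all its roots in $K$, each simple and at an unramified place. This condition is controlled by a single polynomial over $K$ --- the analogue, for the Drinfeld modular curve $X_0(T^n)$, of the Deuring supersingular polynomial. Here, and (as the introduction stresses) essentially only here, one uses the modular interpretation: the roots of this polynomial are supersingular invariants in the relevant Drinfeld-modular characteristic, and exactly as classical supersingular $j$-invariants lie in $\mathbb F_{p^2}$, these lie in the quadratic extension of $\mathbb F_q(\gamma)$, that is, in $K$. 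This is precisely why one passes to the quadratic extension. Consequently the polynomial splits completely over $K$, the completely split locus of $\mathcal F/F_0$ is non-empty, and counting its degree-one places produces the integer $t$.

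Combining the estimates, $\lambda(\mathcal F)\ge 2t/C=q^d-1=\sqrt{|K|}-1$, so equality holds in Drinfeld--Vladut and $\mathcal F$ is optimal over $K$. For the tower $\mathcal E$ given by $(y_{n+1}+1)^{q-1}y_{n+1}=\gamma^{q-1}\,y_n^q/(y_n+1)^{q-1}$ I would run the identical two-step argument: the map $Y\mapsto(Y+1)^{q-1}Y$ has branch locus $\{0,\infty\}$ (ramification index $q-1$ over $0$, tame, and $q$ over $\infty$, wild), so $\mathcal E$ ramifies only over $y_0\in\{0,-1,\infty\}$, with bounded conductors and a finite, recursion-closed ramification locus; and the completely split places over $K$ are again detected by the same supersingular polynomial, which splits over $K$ by the same modular input. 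One checks that the resulting ratio is once more $q^d-1$, so $\mathcal E$ is optimal over $K$ as well. Alternatively, since $\mathcal E$ and $\mathcal F$ are linked by an explicit correspondence, as in the untwisted case of Equations~(\ref{eq:jnteq}) and~(\ref{eq:elkieseq}), one may instead relate the two towers directly and transfer the conclusion from one to the other.
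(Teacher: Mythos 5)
Your overall strategy---bound the genus via a finite, recursion-stable ramification locus and produce completely splitting places via a Deuring-type polynomial whose roots are made rational by the modular interpretation---is the same as the paper's, and your genus analysis of $\mathcal F$ (ramification only above the zeros and the pole of $x_0^q+x_0$, $2$-bounded Artin--Schreier-type steps in the style of Garcia--Stichtenoth) as well as your treatment of $\mathcal E$ match the paper. The genuine gap is in the splitting step for $\mathcal F$, and it is exactly the point the paper flags as ``requiring some more work.'' The fields $F_n$ are \emph{not} function fields of Drinfeld modular curves: they are degree-$(q-1)$ Kummer extensions of the modular function fields $E_n$ via $\xi_j=x_j^{q-1}$. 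The modular input (supersingular points of $X_0(T^n)/\FF_L$ are $\FF_L^{(2)}$-rational) therefore only gives rationality of the supersingular values of $j$, $u$, or $\xi_0=x_0^{q-1}$; it says nothing about whether those values are $(q-1)$-st powers in $\FF_L^{(2)}$, i.e.\ whether the corresponding residues of $x_0$ (and the Artin--Schreier roots at every later step) lie in $K$. Your sentence asserting that the roots of the split-locus polynomial for $\mathcal F$ ``are supersingular invariants \dots hence lie in the quadratic extension'' silently identifies the $x$-line with a modular curve, which it is not, so the claim that this polynomial splits over $K$ is unproven. Your fallback of transferring optimality from $\mathcal E$ to $\mathcal F$ via the explicit correspondence goes the wrong way: tower limits pass from a tower down to its subtowers, so optimality of the subtower $\mathcal E$ does not by itself yield optimality of the supertower $\mathcal F$.

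The paper closes precisely this gap with Lemma \ref{lem:galoisclosure}: by Abhyankar's results on the equation $(u_1+T)^{q+1}-j_1(u_1+T)+j_1T$ together with the explicit expression $j_1=-T^q(x_0^{q^2}-x_0)^{q+1}/(x_0^q+x_0)^{q^2+1}$, the extension $\FF_L^{(2)}(x_0)/\FF_L^{(2)}(j_1)$ is Galois with group $\PGL(2,\F)$ and is the Galois closure of $\FF_L^{(2)}(u_1)/\FF_L^{(2)}(j_1)$. Since a rational place splits completely in a separable extension if and only if it splits completely in its Galois closure, the places furnished by Corollary \ref{cor:splitting2} (rational and completely split in $\FF_L^{(2)}(x_0^{q-1})/\FF_L^{(2)}(u_1)$) split completely in $\FF_L^{(2)}(x_0)$; this is what forces the relevant roots to be $(q-1)$-st powers in $\FF_L^{(2)}$ and makes the splitting locus of $\mathcal F^{(L)}$ rational over $K$. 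Any complete write-up along your lines needs this (or an equivalent) argument. A smaller omission: even for $\mathcal E$, the identification of the split locus with the supersingular locus is not a quotation but a proof in the paper---one first shows splitting over \emph{some} finite extension via the recursively defined polynomials $p_i$ and the identity of Theorem \ref{thm:keyresult}, and then combines Gekeler's count of supersingular points with the generalized Drinfeld--Vladut bound (Theorem \ref{thm:supersingular}) to conclude that the roots are supersingular and hence lie in $\FF_L^{(2)}$; your sketch takes this identification for granted.
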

Note that by taking  $\gamma=1$ one recovers the towers given by
Equation~\ref{eq:jnteq} and Equation~\ref{eq:elkieseq}, respectively.
The optimality of $(E_n)_{n\geq 0}$ follows directly from
\cite{ge2}, whereas the tower  $(F_n)_{n\geq 0}$ requires some
more work.  We identify it as a partial Galois closure of the tower $(E_n)_{n\geq 0}$.
Alternatively, one can attempt to find a more elementary proof of
this, since the towers themselves are given by explicit equations. The
computations of the genera of $E_n$ and $F_n$ can be done in exact
analogy of \cite{bezerragarcia} and \cite{GSJNT} , respectively. Finding many
rational places however is more tricky. Using only explicit methods (a
certain class of recursively defined polynomials and some remarkable
identities they satisfy), we are able to show that over some fixed
finite extension field of $\mathbb F_q(\alpha)$, more precisely the
splitting field of a certain polynomial, there are totally splitting
places. To show that only taking a quadratic extension of $\mathbb F_q(\alpha)$ is sufficient, we use
modular theory. More precisely, we identify the splitting places with
the supersingular points of the corresponding reduction of the curve
$X_0(T)$, which are known to be defined already over a quadratic extension. So in particular the recursively defined polynomials introduced to study the splitting locus can be seen as a certain
analog of the Deuring polynomial to the case of Drinfeld modules.

\section{The Drinfeld modular towers $(X_0(P^n))_{n\ge0}$}\label{section:one}

In this section we will restrict ourselves to the case of Drinfeld modular curves. The case of elliptic modular curves is analogous.
We denote by $\FF$ the field $\F(T)$ and let $N \in \F[T]$ be a monic polynomial. Let $\phi$ be a Drinfeld module of rank two with $j$-invariant $j_0$ and $\phi'$ be an $N$-isogenous Drinfeld module with $j$-invariant $j_1$. The Drinfeld modular polynomial $\Phi_N(X,Y)$ relates these $j$-invariants, more precisely it holds that $\Phi_N(j_0,j_1)=0$. Thinking of $j_0$ as a transcendental element, we can use this equation to define a so called Drinfeld modular curve $X_0(N)$. If we want to emphasize the role of $N$, we will write $j_1=j_1(N)$. It should be noted that $j_0$ is independent of $N$, but it will be convenient to define $j_0(N):=j_0$. The function field $\FF(X_0(N))$ of $X_0(N)$ is therefore given by $\FF(j_0(N),j_1(N))$. Moreover, it is known that

\begin{equation}\label{eq:extdegree}
[\FF(j_0(N),j_1(N)):\FF(j_0(N))]=q^{\deg(N)}\prod_{\substack{A | N \\\ A \ \mathrm{ prime}}}\left(1+\frac{1}{q^{\deg(A)}}\right).
\end{equation}

In principle the work of finding an explicit description of the function field $\FF(X_0(N))$ is done, once the modular polynomial $\Phi_N(X,Y)$ has been computed. However, for general $q$ the Drinfeld modular polynomial is not known explicitly, not even in the case $N=T$. For a given $q$ it can be computed, but this is not always an easy task, since the coefficients of this polynomial tend to get very complicated as the degree of the polynomial $N$ increases. However, following Elkies's ideas (\cite{elkies,elkiesd}) from the modular polynomial $\Phi_P(X,Y)$ for a fixed polynomial $P$, the function fields of the Drinfeld modular curves $X_0(P^n)$ can be described easily in an explicit way. The reason for this is that for polynomials $P,Q \in \F[T]$ a $PQ$-isogeny can be written as the composite of a $P$-isogeny and a $Q$-isogeny, which implies that there is a natural projection from $X_0(PQ)$ to $X_0(P)$ or equivalently an inclusion of function fields $\FF(X_0(P)) \subset \FF(X_0(PQ))$. This implies that function field $\FF(X_0(P^n))$ also contains the function fields $\FF(X_0(P^e))$, for integer satisfying $1\le e \le n$, and hence $j_1(P^e) \in \FF(X_0(P^n))$. Defining $j_e(P):=j_1(P^e)$ for $e \ge 1$, we see that $j_e(P) \in \FF(X_0(P^n))$ for $1 \le e \le n$. Since $j_0$ is independent of $P$, we also have $j_0(P)=j_0(P^n) \in \FF(X_0(P^n))$. Therefore the field $\FF(X_0(P^n))=\FF(j_0(P),j_n(P))=\FF(j_0(P),j_1(P),\dots,j_{n-1}(P),j_n(P))$, is the composite of the fields $\FF(j_e(P),j_{e+1}(P))$ for $e=0,\dots,n-1$. Since $P^{e+1}=PP^e$, any $P^{e+1}$-isogeny can be written as the composite of a $P$-isogeny and a $P^e$-isogeny. This means that $j_e(P)$ and $j_{e+1}(P)$ correspond to $P$-isogenous Drinfeld modules and hence we have $\Phi_P(j_e(P),j_{e+1}(P))=0$ for any $e$ between $0$ and $n-1$. We see that $\FF(X_0(P^n))$ is the composite of $n$ fields isomorphic to $\FF(X_0(P))=\FF(j_0(P),j_1(P))$, the function field of $X_0(P)$. This observation led Elkies to construct a number of recursively defined {\emph{towers}} $(X_0(P^n))_{n\ge 2}$ of modular curves in \cite{elkies,elkiesd}. In \cite{elkies} several models defined over $\Q$ of classical modular curves are given, while in \cite{elkiesd} the reduction mod $T-1$ of the Drinfeld modular tower $X_0(T^n)_{n\ge 2}$ was described.

We consider the function field of $X_0(P^n)$. We have $$\FF(X_0(P^n))=\FF(j_0(P),j_1(P),\dots,j_{n-1}(P),j_n(P)).$$ So we can think of $\FF(X_0(P^n))$ as iteratively obtained from $\FF(j_0(P))$ by adjoining the elements $j_1(P),j_2(P),\dots,j_n(P)$, where $j_{e+1}(P)$ is a root of the polynomial $\Phi_P(j_{e}(P),t) \in \FF(X_0(P^e))[t]$ for $0 \le e < n$. However, except for $j_1(P)$ these polynomials are not irreducible. In fact the extension $\FF(X_0(P^2))/\FF(X_0(P))$ has degree $q^{\deg P}$ by Equation (\ref{eq:extdegree}). This means that the polynomial $\Phi_P(j_1(P),t) \in \FF(j_0(P),j_1(P))[t]$ has a factor $\Psi_P(j_0(P),j_1(P),t)$ of degree $q^{\deg P}$ such that $\Psi(j_0(P),j_1(P),j_2(P))=0$. By clearing denominators if necessary, we can assume that $\Psi_P(j_0(P),j_1(P),t) \in \FF[j_0(P),j_1(P)][t]$. Then clearly the trivariate polynomial $\Psi_P(X,Y,Z) \in \FF[X,Y,Z]$ satisfies $\Psi_P(j_{e-1}(P),j_e(P),j_{e+1}(P))=0$ for all $0<e<n$. The function field $\FF(X_0(P^n))$ can therefore be generated recursively by the equations $\Phi_P(j_0(P),j_1(P))=0$ and $\Psi_P(j_{e-1}(P),j_e(P),j_{e+1}(P))=0$ for $0<e<n$. Note that the depth of the recursion is two in general. We arrive at the following proposition.

\begin{proposition}\label{prop:depth two}
Let $P \in \F[T]$ be a polynomial and $n \ge 0$ an integer. The function field $G_n$ of the Drinfeld modular curve $X_0(P^n)$ is generated by elements $j_0,\dots,j_n$ satisfying:
$$\Phi_P(j_0,j_1)=0,$$ with $\Phi_P(X,Y)$ the Drinfeld modular polynomial corresponding to $P$ and
$$\Psi_P(j_{e-1},j_e,j_{e+1})=0, \makebox{ for } 1\le e <n,$$ with $\Psi_P(X,Y,Z)$ a suitable trivariate polynomial of $Z$-degree $q^{\deg P}$.
Consequently, the tower of function fields $\mathcal{G}:=(G_n)_{n \ge 0}$ can be recursively defined by a recursion of depth two in the following way:
$$G_0:=\FF(j_0),$$ $$G_1:=\FF(j_0,j_1), \makebox{ where } \Phi_P(j_0,j_1)=0$$ and for $n\ge 1$ $$G_{n+1}:=G_{n}(j_{n+1}) \makebox{ where } \Psi_P(j_{n-1},j_n,j_{n+1})=0.$$
\end{proposition}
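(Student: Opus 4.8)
The plan is to turn the discussion preceding the statement into a proof whose only genuinely structural input is that a cyclic $P^e$-isogeny of rank-two Drinfeld modules decomposes, uniquely, as a chain of $e$ consecutive cyclic $P$-isogenies. I would begin with the generation statement. Writing $P^e=P\cdot P^{e-1}$, every cyclic $P^e$-isogeny factors as a $P$-isogeny followed by a cyclic $P^{e-1}$-isogeny, which produces the inclusions $\FF(X_0(P^{e-1}))\subset\FF(X_0(P^e))$; hence, setting $j_e:=j_1(P^e)$ for $e\ge1$ and $j_0:=j_0(P)=j_0(P^n)$, we obtain $j_0,j_1,\dots,j_n\in\FF(X_0(P^n))$. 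Since by construction $\FF(X_0(P^n))=\FF(j_0(P^n),j_1(P^n))=\FF(j_0,j_n)$, this yields $G_n=\FF(j_0,j_1,\dots,j_n)$, so $G_n$ is the claimed composite of copies of $\FF(X_0(P))=\FF(j_0,j_1)$.

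Next come the defining relations. The equation $\Phi_P(j_0,j_1)=0$ is the very definition of $X_0(P)$, and, via $P^{e+1}=P\cdot P^e$, the modules with invariants $j_e$ and $j_{e+1}$ are $P$-isogenous, so $\Phi_P(j_e,j_{e+1})=0$ for every $e$. The polynomial $\Psi_P$ is manufactured at level two: by Equation~(\ref{eq:extdegree}) one has $[\FF(X_0(P^2)):\FF(X_0(P))]=q^{\deg P}$, and since $\FF(X_0(P^2))=\FF(j_0,j_2)=\FF(X_0(P))(j_2)$, the minimal polynomial of $j_2$ over $\FF(j_0,j_1)$ has degree $q^{\deg P}$; it divides $\Phi_P(j_1,t)$ because $\Phi_P(j_1,j_2)=0$. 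Clearing denominators yields a trivariate polynomial $\Psi_P(X,Y,Z)\in\FF[X,Y,Z]$ of $Z$-degree $q^{\deg P}$ with $\Psi_P(j_0,j_1,j_2)=0$.

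To propagate $\Psi_P(j_{e-1},j_e,j_{e+1})=0$ to all $1\le e<n$, I would use the factorization $P^{e+1}=P^2\cdot P^{e-1}$: the ``terminal segment'' assignment sending a cyclic $P^{e+1}$-isogeny $\phi_0\to\phi_{e+1}$ to the cyclic $P^2$-isogeny $\phi_{e-1}\to\phi_{e+1}$ induces an embedding $\FF(X_0(P^2))\hookrightarrow\FF(X_0(P^{e+1}))\subseteq\FF(X_0(P^n))$ carrying the standard generators of $\FF(X_0(P^2))$ to $j_{e-1},j_e,j_{e+1}$, so pulling back $\Psi_P(j_0,j_1,j_2)=0$ gives the identity. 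The recursive reformulation is then immediate: $G_0=\FF(j_0)$ and $G_1=\FF(j_0,j_1)$ with $\Phi_P(j_0,j_1)=0$, while for $n\ge1$ we have $G_{n+1}=\FF(j_0,\dots,j_{n+1})=G_n(j_{n+1})$ where $j_{n+1}$ is a root of $\Psi_P(j_{n-1},j_n,t)$; since $[G_{n+1}:G_n]=q^{\deg P}$ coincides with the $Z$-degree of $\Psi_P$, that polynomial is, up to a scalar, the minimal polynomial of $j_{n+1}$ over $G_n$, so the recursion has depth two as stated.

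The main obstacle is the propagation step, i.e.\ the assertion that one and the same $\Psi_P$ serves at every level. This comes down to the fact that a contiguous sub-chain of a cyclic chain of $P$-isogenies is again cyclic, so that the triple $(j_{e-1},j_e,j_{e+1})$ genuinely parametrizes a point of a curve isomorphic to $X_0(P^2)$ rather than of some other quotient; this is precisely where the modular interpretation of the function-field inclusions is used, and it is the only place in the argument requiring more than bookkeeping. A minor side point is to ensure that the leading $Z$-coefficient of $\Psi_P$ does not vanish on the relevant tuples, so that no spurious drop in degree occurs; this can be arranged in the choice of $\Psi_P$.
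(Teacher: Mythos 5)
Your proposal is correct and follows essentially the same route as the paper's own (informal) argument preceding the proposition: isogeny factorization gives the inclusions $\FF(X_0(P^e))\subset\FF(X_0(P^n))$ and the generators $j_e=j_1(P^e)$, the degree count from Equation~(\ref{eq:extdegree}) produces $\Psi_P$ as a $Z$-degree $q^{\deg P}$ factor of $\Phi_P(j_1,t)$, and the relation is transported to all levels via the modular interpretation. The only difference is that you make explicit (via the ``terminal segment'' embedding $\FF(X_0(P^2))\hookrightarrow\FF(X_0(P^{e+1}))$ and the cyclicity of subchains) the propagation step that the paper simply calls clear, which is a welcome but not essentially different elaboration.
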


\begin{remark}\label{rem:casePprime}
The polynomial $\Psi_P(X,Y,Z)$ is easy to describe if $P$ is a prime. In that case $\deg_Y(\Phi_P(X,Y))=q^{\deg P}+1$. Since $\Phi_P(X,Y)$ is a symmetric polynomial, it holds that $$\Phi_P(j_1(P),j_0(P))=\Phi_P(j_0(P),j_1(P))=0.$$ Therefore, the polynomial $\Phi_P(j_1(P),t) \in \FF(X_0(P))[t]$ has the factor $t-j_0(P)$. The factor $\Psi(j_0(P),j_1(P),t)$ can be obtained by dividing $\Phi_P(j_1(P),t)$ by $t-j_0(P)$. Note that in this case automatically $\deg_t \Psi_P(j_0(P),j_1(P),t)=q^{\deg P}$ and $\Psi_P(j_0(P),j_1(P),j_2(P))=0$, as desired.
\end{remark}

By \cite{schweizer3} $X_0(P)$ is rational if and only if $P$ has degree one or two. In that case the tower $(\FF(X_0(P^n)))_{n\ge 1}$ can be generated in a more simple way. Let $e\ge 1$ and let $u_{e-1}(P)$ be a generating element of $\FF(j_{e-1}(P),j_e(P))$ over $\FF$. Then $j_{e-1}(P)=\psi(u_{e-1}(P))$ and $j_e(P)=\phi(u_{e-1}(P))$ for certain rational functions $\psi(t)=\psi_0(t)/\psi_1(t)$ and $\phi(t)=\phi_0(t)/\phi_1(t)$. Here $\psi_0(t)$ and $\psi_1(t)$ (resp. $\phi_0(t)$ and $\psi_1(t)$) denote relatively prime polynomials. Since $\FF(u_{e-1}(P))=\FF(j_{e-1}(P),j_e(P))$, one can generate the function field of $X_0(P^n)$ for $n \ge 1$ by $u_0(P),\dots,u_{n-1}(P)$. These generating elements satisfy the equations $\psi(u_e(P))=\phi(u_{e-1}(P))$ with $1\le e <n$, since $\psi(u_e(P))=j_e(P)=\phi(u_{e-1}(P))$. Similarly as before, one can find generating relations of minimal degree by taking a factor $f_P(u_0(P),t)$ of $\psi_0(t)\phi_1(u_{0}(P))-\psi_1(t)\phi_0(u_{0}(P))$ of degree $q^{\deg P}$ such that $f(u_0(P),u_1(P))=0$. The function field $\FF(X_0(P^n))$ with $n\ge 1$ can then recursively be defined by the equations $f(u_{e-1},u_e)=0$ for $1\le e <n$. We arrive at the following proposition.

\begin{proposition}\label{prop:depth one}
Let $P \in \F[T]$ be a polynomial of degree one or two and $n \ge 0$ an integer. There exists a bivariate polynomial $f_P(X,Y) \in \FF[X,Y]$ of $Y$ degree $q^{\deg P}$ such that the function field $G_n$ of the Drinfeld modular curve $X_0(P^n)$ is generated by elements $u_0,\dots,u_{n-1}$ satisfying:
$$f_P(u_{e-1},u_e)=0, \makebox{ for } 1\le e < n.$$
Consequently, the tower of function fields $\mathcal{G}:=(G_n)_{n \ge 1}$ can be defined by a recursion of depth one:
$$G_1:=\FF(u_0)$$ and for $n\ge 1$ $$G_{n+1}=G_{n}(u_{n+1}) \makebox{ where } f_P(u_n,u_{n+1})=0.$$
\end{proposition}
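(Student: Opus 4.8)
The plan is to combine Schweizer's rationality criterion with the presentation of $\FF(X_0(P^n))$ as a composite of copies of $\FF(X_0(P))$ recalled above. Since $\deg P\le2$, the curve $X_0(P)$ is rational by \cite{schweizer3}, so by L\"uroth's theorem $\FF(X_0(P))=\FF(j_0(P),j_1(P))$ is a rational function field; I would fix a generator $u_0$ of it and write $j_0(P)=\psi(u_0)$ and $j_1(P)=\phi(u_0)$ with $\psi=\psi_0/\psi_1$, $\phi=\phi_0/\phi_1$ in $\FF(t)$, numerators and denominators coprime. For $0\le e\le n-1$ the modular interpretation supplies an $\FF$-isomorphism $\sigma_e$ from $\FF(X_0(P))=\FF(j_0(P),j_1(P))$ onto the subfield $\FF(j_e(P),j_{e+1}(P))$ of $\FF(X_0(P^n))$ with $\sigma_e(j_0(P))=j_e(P)$ and $\sigma_e(j_1(P))=j_{e+1}(P)$; this is precisely the sense in which $\FF(X_0(P^n))$ is a composite of copies of $\FF(X_0(P))$. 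Put $u_e:=\sigma_e(u_0)$, so that $j_e(P)=\psi(u_e)$ and $j_{e+1}(P)=\phi(u_e)$.

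I would then check that $u_0,\dots,u_{n-1}$ generate $G_n=\FF(X_0(P^n))$: on the one hand $\FF(u_e)=\sigma_e(\FF(u_0))=\FF(j_e(P),j_{e+1}(P))\subseteq G_n$ for $0\le e\le n-1$, and on the other $j_e(P)=\psi(u_e)$ for $0\le e\le n-1$ together with $j_n(P)=\phi(u_{n-1})$ give $G_n=\FF(j_0(P),\dots,j_n(P))\subseteq\FF(u_0,\dots,u_{n-1})$. Next, comparing the two expressions for the intermediate invariant, $\psi(u_e)=j_e(P)=\phi(u_{e-1})$ (coming from the $(e+1)$-st and the $e$-th copy), and clearing denominators, $u_e$ is a root of
$$g(u_{e-1},t):=\psi_0(t)\phi_1(u_{e-1})-\psi_1(t)\phi_0(u_{e-1})\in\FF(u_{e-1})[t].$$
Since $P$, $P^{e}$ and $P^{e+1}$ have the same prime divisors, Equation~(\ref{eq:extdegree}) gives $[\FF(X_0(P^{e+1})):\FF(X_0(P^{e}))]=q^{\deg P}$; in particular $[\FF(u_0,u_1):\FF(u_0)]=q^{\deg P}$, so the minimal polynomial of $u_1$ over $\FF(u_0)$ has $t$-degree $q^{\deg P}$ and divides $g(u_0,t)$. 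Clearing denominators turns it into a polynomial $f_P(X,Y)\in\FF[X,Y]$ of $Y$-degree $q^{\deg P}$ with $f_P(u_0,u_1)=0$. Transporting this identity along the $\FF$-isomorphism $\FF(X_0(P^2))=\FF(j_0(P),j_1(P),j_2(P))\to\FF(j_{e-1}(P),j_e(P),j_{e+1}(P))$ induced by the level shift $j_i(P)\mapsto j_{e-1+i}(P)$ — which, being compatible with the $\sigma$'s, sends $u_0\mapsto u_{e-1}$ and $u_1\mapsto u_e$ while fixing the coefficients of $f_P$ in $\FF$ — then yields $f_P(u_{e-1},u_e)=0$ for all $1\le e<n$.

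It remains to see that these relations genuinely build the tower floor by floor. For every $m\ge1$ the element $u_{m-1}$ is transcendental over $\FF$, so the leading $Y$-coefficient of $f_P$ (a nonzero polynomial in $X$) does not vanish at $u_{m-1}$ and $f_P(u_{m-1},Y)$ has $Y$-degree $q^{\deg P}$; since $u_m$ is a root of it and $[\FF(u_0,\dots,u_m):\FF(u_0,\dots,u_{m-1})]=[\FF(X_0(P^{m+1})):\FF(X_0(P^m))]=q^{\deg P}$, this polynomial is, up to a scalar, the minimal polynomial of $u_m$ over $\FF(u_0,\dots,u_{m-1})$ and in particular irreducible there. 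Hence $G_1=\FF(u_0)$ together with $G_{m+1}=G_m(u_m)$, where $f_P(u_{m-1},u_m)=0$, recovers the tower $\mathcal{G}$.

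The step I expect to be the main obstacle is guaranteeing that a \emph{single} polynomial $f_P$, with coefficients in the constant field $\FF$ and independent of the level, defines every step of the tower. This rests entirely on the modular interpretation — concretely, on the level-translation isomorphisms identifying each double floor $\FF(j_{e-1}(P),j_e(P),j_{e+1}(P))$ with $\FF(X_0(P^2))$ compatibly with the $\sigma_e$'s (the same symmetry that produced the level-independent trivariate polynomial $\Psi_P$ above) — together with the degree count of Equation~(\ref{eq:extdegree}), which forces the extracted factor to have $Y$-degree exactly $q^{\deg P}$.
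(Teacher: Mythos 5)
Your proposal is correct and takes essentially the same route as the paper's own derivation (the discussion preceding the proposition): rationality of $X_0(P)$ for $\deg P\le 2$ via Schweizer, a uniformizer $u_{e}$ of each copy $\FF(j_{e}(P),j_{e+1}(P))$, the relation $\psi(u_e)=\phi(u_{e-1})$, and extraction of a factor of $Y$-degree $q^{\deg P}$ forced by Equation~(\ref{eq:extdegree}). Your explicit use of the level-shift isomorphisms to guarantee that a single, level-independent $f_P$ works at every floor just makes precise what the paper treats as immediate from the modular interpretation.
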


Finally, if $P$ is a polynomial of degree one, then both $X_0(P)$ and $X_0(P^2)$ are rational. In that case, there exist $u_{e-1}(P),u_e(P)$ as above and $v_{e-1}(P)$ such that $\FF(u_{e-1}(P),u_{e}(P))=\FF(v_{e-1}(P))$ for $e>0$. Similarly as above, there exist rational functions $\psi'(t)$ and $\phi'(t)$ such that
$u_{e-1}(P)=\psi'(v_{e-1}(P))$ and $u_e(P)=\phi'(v_{e-1}(P))$. These rational functions will have degree $q^{\deg P}=q$, since $[\FF(v_{e-1}(P)):\FF(u_{e-1}(P))]=[\FF(v_{e-1}(P)):\FF(u_{e}(P))]=q.$ The function field $\FF(X_0(P^n))$ with $n\ge 2$ can then recursively be defined by the equations $\psi'(v_e(P))=\phi'(v_{e-1}(P))$ for $1\le e <n-1$. The depth of the recursion is one and moreover, the variables can be separated in the defining equations. Since we assume $\deg P=1$, this puts a heavy restriction on the number of possibilities. In fact, without loss of generality we may assume that $P=T$. In the next section we will describe this case in detail, obtaining explicit equations describing the Drinfeld modular tower $\FF(X_0(T^n))_{n\ge 2}$. In the case of classical modular curves, Elkies in \cite{elkies} gave, among others, several similar examples by considering numbers $p$ such that the genus of the classical modular curves $X_0(p)$ and $X_0(p^2)$ is zero.

Given a prime $L \in \F[T]$, we denote by $\FF_L$ the finite field $\F[T]/(L)$. Moreover, we write $\FF_L^{(2)}$ for the quadratic extension of $\FF_L$.  Gekeler showed in \cite{ge2} that the reduction modulo any prime $L \in \F[T]$ not dividing $P$ of the tower $(X_0(P^n))_{n\ge 0}$ gives rise to an asymptotically optimal tower over the constant field $\FF_L^{(2)}$. This means that the tower found in \cite{elkiesd}, being the reduction of $(X_0(T^n))_{n\ge 0}$ modulo $T-1$, is asymptotically optimal over the constant field $\FF_{T-1}^{(2)}=\mathbb{F}_{q^2}.$

Now we will give several examples. Sometimes we do not give all details, since this would fill many pages. Several computations were carried out using the computer algebra package MAGMA \cite{magma}. For example all Drinfeld modular polynomials below were calculated using MAGMA. On occasion, we will perform all calculations sketched above for a reduced version of the tower $(\FF(X_0(P^n)))_{n\ge 0}$, since the resulting formulas are usually much more compact after reduction. In all examples in this section, it is assumed that $q=2$, while $P$ will be a polynomial of degree one or two.

\begin{example}[$P=T, q=2$]
By \cite{schweizer}, the Drinfeld modular polynomial of level $T$ in case $q=2$ is given by
\begin{equation*}
\begin{split}
\Phi_{T}(X,Y)  & = X^3+Y^3+T(T+1)^3(X^2+Y^2)+T^2(T+1)^6(X+Y)\\
 &+T^3(T+1)^9+X^2Y^2+(T+1)^3(T^2+T+1)XY+T(X^2Y+XY^2).
\end{split}
\end{equation*}
The polynomial $\Psi_T(X,Y,Z)$ can readily be found using Remark \ref{rem:casePprime}:
\begin{equation*}
\begin{split}
\Psi_{T}(X,Y,Z)
& =  Z^2 + (X + (Y^2 + TY + T(T+1)^3))Z + X^2 + (Y^2 + TY + T(T+1)^3)X + TY^2\\
& + (T^2+T+1)(T+1)^3Y + T^2(T+1)^6\\
\end{split}
\end{equation*}
Using Proposition \ref{prop:depth two}, we can in principle now describe the tower of function fields of the modular curves $(X_0(T^n))_{n \ge 0}$. However, we can use Proposition \ref{prop:depth one} to find a recursive description of depth one. First we need a uniformizing element $u_0$ of $\FF(j_0,j_1)$.
Using a computer, one finds
$$u_0=\frac{T^3(T^2j_0+T^2+T^4+T^6+1+Tj_1+T^2j_1+Tj_0+j_0j_1)}{(T^3+j_1^2+T^2+j_0+Tj_1+T^3j_0+T^7+T^4j_1+T^6}.$$
Expressing $j_0$ and $j_1$ turns out to give a more compact formula.

$$j_0=\dfrac{(u_0+T)^3}{u_0} \makebox{ and } j_1=\dfrac{(u_0+T^2)^3}{u_0^2}.$$

This means that the variables $u_0$ and $u_1$ satisfy the equation:
$$\dfrac{(u_0+T^2)^3}{u_0^2}=\dfrac{(u_1+T)^3}{u_1}.$$
However, this is not an equation of minimal degree. As explained before Proposition \ref{prop:depth one}, we can find an equation of degree (in this case) two by factoring:
$$(X+T^2)^3Y+(Y+T)^3X^2=(XY+T^3)(X^2+XY^2+XYT+YT^3).$$ We find that $f_T(X,Y)=X^2+XY^2+XYT+YT^3$. This polynomial recursively defines the tower of function fields of the modular curves $(X_0(T^n))_{n \ge 1}$ as in Proposition \ref{prop:depth one}.
According to the discussion following Proposition \ref{prop:depth one} an even more structured description is possible. Since the case for general $q$ is done in the next section, we will not derive that description here.
\end{example}

\begin{example}[$P=T^2+T+1, q=2$]
The Drinfeld modular polynomial of level $T^2+T+1$ is given by
\begin{equation*}
\begin{split}
\Phi_{T^2+T+1}(X,Y)  & = X^5+Y^5 + X^4Y^4 + (T^2 + T + 1)(X^4Y^2+X^2Y^4)\\
& + (T^2 + T + 1)(X^4Y+XY^4) + T^3(T+1)^3(T^2+T+1)(X^4+Y^4)  \\
& + T^2(T+1)^2(T^2+ T + 1)X^3Y^3 \\
& + T(T+1)(T^2+T+1)(T^3+T+1)(T^3+T^2+1)(X^3Y^2+X^2Y^3) \\
& + T^3(T+1)^3(T^2+T+1)(X^3Y+XY^3) + T^6(T+1)^6(T^2+T+1)^2(X^3+Y^3)\\
& + T^5(T+1)^5(T^2+T+1)(T^4+T+1)X^2Y^2\\
& + T^6(T+1)^6(T^2+T+1)(T^4+T+1)(X^2Y+XY^2)\\
& + T^9(T+1)^9(T^2+T+1)^3(X^2+Y^2) + T^{11}(T+1)^{11}X Y
\end{split}
\end{equation*}
As in the previous example one can use Remark \ref{rem:casePprime}, to find the trivariate polynomial $\Psi_{T^2+T+1}(X,Y,Z)$. Finding a uniformizing element $u_0$ of $\F(X_0(T^2+T+1))$ is more tricky and in fact turns out to fill several pages. Below we will state the reduction of $u_0$ modulo $T$ and $T+1$, so the reader can get an impression of its form. Once $u_0$ is found, $j_0$ and $j_1$ can be expressed in terms of it. In this case we find:
$$j_0=\dfrac{(u_0+1)^3(u_0^2+u_0+T^2+T+1)}{u_0} \makebox{ and } j_1=\dfrac{(u_0+T^2+T+1)^3(u_0^2+u_0+T^2+T+1)}{u_0^4}$$
To find the polynomial $f_{T^2+T+1}(X,Y)$, we need to factor the polynomial

\begin{equation*}
\begin{split}(Y^5+(T^2+T+1)Y^3+(T^2+T+1)Y^2+(T^2+T)Y+(T^2+T+1))X^4 + & \\ Y(X^5+(T^2+T)X^4+(T^2+T+1)^2X^3+(T^2+T+1)^3X^2+(T^2+T+1)^4),
\end{split}
\end{equation*}
whose factors are $XY+T^2+T+1$ and
\begin{equation*}
\begin{split}
f_{T^2+T+1}(X,Y)& =Y^4X^3 + (T^2 + T + 1)(Y^3X^2 + Y^2X^3 + (T^2 + T + 1)Y^2X + YX^3 \\ & + (T^2 + T + 1)YX^2 + (T^2 + T + 1)^2Y) + X^4
\end{split}
\end{equation*}
The polynomial $f_{T^2+T+1}(X,Y)$ recursively defines the tower of function fields of the modular curves $(X_0((T^2+T+1)^n))_{n \ge 1}$ as in Proposition \ref{prop:depth one}.

We consider the reduction modulo $T$ or $T+1$ of this tower, which gives an optimal tower over $\mathbb{F}_4$. While a uniformizing element of $\FF(X_0(T^2+T+1))$ was too long to be stated, over $\mathbb{F}_4(X_0(T^2+T+1))$ is given by
$$
u_0:=\dfrac{j_0^4j_1^3 + j_0^4j_1^2 + j_0^4j_1 + j_0^4 + j_0^3j_1^7 + j_0^3j_1^6 + j_0^3j_1^4 + j_0^2j_1^5
    + j_0 j_1^5 + j_0 j_1^4 + j_1^6 + j_1^4}{j_1^8}
$$
Reducing the above found polynomial $f_{T^2+T+1}(X,Y)$ modulo $T$ or $T+1$, we find that the polynomial
$$Y^4X^3 + Y^3X^2 + Y^2X^3 + Y^2X + YX^3+ YX^2 + Y + X^4$$
recursively defines an optimal tower over $\FF_4$.
\end{example}

\begin{example}[$P=T^2+T, q=2$]
In the previous examples, the polynomial $P$ was a prime, but in this example we will consider the composite polynomial $P=T^2+T$. The Drinfeld modular polynomial of level $T^2+T$ has $Y$-degree $9$ by Equation \ref{eq:extdegree}. Using a computer, one finds:
\begin{equation*}
\begin{split}
\Phi_{T^2+T}(X,Y)
& =  X^9 +Y^9+ (X^8Y^4+X^4Y^8) + (T^2 + T + 1)(X^8Y^2+X^2Y^8) \\
&\hspace{-2cm} + (T^2 + T)(X^8Y+XY^8) + (T^6+T^5+T^3+T^2+1)(T^2+T)(X^8+Y^8) + (X^7Y^4+X^4Y^7) \\
&\hspace{-2cm} + (T^2 + T)^3(X^7Y^3+X^3Y^7) + (T^5+T^4+T^3+T+1)(T^5+T^3+T^2+T+1)(T^2+T)^3(X^7+Y^7)\\
&\hspace{-2cm} + (X^6Y^5+X^5Y^6)+ (X^6Y^4+X^4Y^6) + (T^2+T+1)^5(X^6Y^3+X^3Y^6)\\
&\hspace{-2cm} + (T^7+T^6+T^5+T^4+T^2+T+1)(T^7+T^3+T^2+T+1)(T^2+T)(X^6Y^2+X^2Y^6)\\
&\hspace{-2cm} + (T^{14}+T^{13}+T^{11}+T^{10}+T^7+T^5+T^4+T^2+1)(T^2+T)^2(X^6Y+XY^6)\\
&\hspace{-2cm} + (T^4+T+1)(T^2+T+1)(T^2+T)^5(T^8+T^6+T^5+T^4+T^3+T+1)(X^6+Y^6)\\
&\hspace{-2cm} + X^5Y^5 + (T^2+T+1)(T^2+T)^2(X^5Y^4+X^4Y^5) + (T^2 + T)^2(X^5Y^3 +X^3Y^5)\\
&\hspace{-2cm} + (T^9+T^8+T^7+T^5+1)(T^9+T^7+T^6+T^3+T^2+T+1)(X^5Y^2+X^2Y^5)\\
&\hspace{-2cm} + (T^6+T^5+T^2+T+1)(T^6+T^5+1)(T^2+T+1)^3(T^2+T)^2(X^5Y+XY^5)\\
&\hspace{-2cm} + (T^5+T^3+T^2+T+1)(T^5+T^4+T^3+T+1)(T^2+T+1)(T^2+T)^5(X^5+Y^5)\\
&\hspace{-2cm} + (T^{18}+T^{17}+T^{16}+T^{10}+T^9+T^4+T^2+T+1)(T^2+T+1)^2(T^2+T)(X^4Y^2+X^2Y^4)\\
&\hspace{-2cm} + (T^2+T+1)^2(T^2+T)^7(X^4Y+XY^4)+ (T^2+T)^8(T^6+T^5+T^3+T^2+1)(X^4+Y^4)\\
&\hspace{-2cm} + (T^{10}+T^9+T^8+T^6+T^5+T+1)(T^2+T+1)^3X^3Y^3\\
&\hspace{-2cm} + (T^8+T^7+T^2+T+1)(T^8+T^7+T^6+T^5+T^4+T^3+1)(T^2+T+1)(T^2+T)^2(X^3Y^2+X^2Y^3)\\
&\hspace{-2cm} + (T^2+T+1)(T^2+T)^4(T^{10}+T^9+T^8+T^3+T^2+T+1)(X^3Y+XY^3)\\
&\hspace{-2cm} + (T^4+T+1)(T^3+T+1)(T^3+T^2+1)(T^2+T+1)^3(T^2+T)^3X^2Y^2\\
&\hspace{-2cm} + (T^2+T)^{10}(X^2Y+XY^2) + (T^2+T)^{10}(X^2+Y^2)+ (T^4+T+1)(T^2+T)^7(X^3+Y^3)\\
&\hspace{-2cm} + (T^3+T+1)(T^3+T^2+1)(T^2+T)^6XY + (T^2+T+1)(T^2+T)^8(X+Y) + (T^2+T)^9\\
\end{split}
\end{equation*}
Finding a uniformizing element $u_0$ of $\F(X_0(T^2+T))$ and expressing $j_0$ and $j_1$ in it, we find
$$j_0=\dfrac{(u_0^3 + (T^2 + T)u_0 +(T^2 + T))^3}{u_0(u_0+T)^2(u_0+T+1)^2} \makebox{ and } j_1=\dfrac{(u_0^3 + (T^2 + T)u_0^2 +(T^2 + T)^2)^3}{u_0^4(u_0+T)^2(u_0+T+1)^2}$$
To find $f_{T^2+T}(X,Y)$, we need to factor a bivariate polynomial of $Y$-degree $9$. Note that Remark \ref{rem:casePprime} does not apply, though it still predicts the existence of one factor of $Y$-degree one. The factors turn out to be
$$ XY + T^2 + T,$$
$$ Y^2X^2 + TY^2X + (T^2 + T)YX + (T^3 + T^2)Y + T^2X^2 + T^4 + T^2,$$
$$ Y^2X^2 + (T + 1)Y^2X + (T^2 + T)YX + (T^3 + T)Y + (T^2 + 1)X^2 + T^4+ T^2,$$
and
\begin{equation*}
\begin{split}
Y^4X^3 + Y^4X^2 + (T^2 + T)Y^4X + (T^2 + T)Y^3X^2 + (T^2 + T)Y^3X +(T^4 + T^2)Y^3 + (T^2 + T + 1)Y^2X^3 &\\
+ (T^4 + T^2)Y^2X + (T^4 + T^2)Y^2 + (T^2 + T)YX^3 + (T^4 + T)YX^2 + (T^6 + T^5 + T^4 + T^3)Y + X^4.
\end{split}
\end{equation*}
The last factor is $f_{T^2+T}(X,Y)$, since it is the only factor of $Y$-degree $4$.
Considering reduction modulo $T^2+T+1$, we see that the polynomial
$$Y^4X^3 + Y^4X^2 + Y^4X + Y^3X^2 + Y^3X +Y^3 + Y^2X + Y^2 + YX^3 + Y + X^4$$ recursively defines an optimal tower over $\mathbb{F}_{16}$.
\end{example}

\section{The Drinfeld modular tower $(X_0(T^n))_n$}\label{section:two}

For general $q$ the Drinfeld modular polynomial is not known explicitly, not even in the case $P=T$. Nonetheless, for $P=T$ Schweizer \cite{schweizer} found an explicit description of the relation between $j_0$ and $j_1$ as well as a uniformizing parameter of the function field of $X_0(T)$. More precisely, he showed that $\FF(X_0(T))$ is given by $\FF(u_0)$, where
\begin{equation}\label{eq:jandjprime}
j_0=\dfrac{(u_0+T)^{q+1}}{u_0}  \makebox{ and } j_1=\dfrac{(u_0+T^q)^{q+1}}{u_0^q}.
\end{equation}
We will use this description to find an explicit description of the function fields $\FF(X_0(T^n))$ for any $n$.

There exist a model of the curve $X_0(T^n)$ whose reduction modulo any prime element $L \in \F[T]$ different from $T$ gives rise to a curve defined over the finite field $\FF_L$. We will denote this reduced curve by $X_0(T^n)/\FF_L$. Reduction modulo such a prime $L$ gives rise to an optimal tower $(X_0(T^n)/\FF_L^{(2)} )_{n\ge 0}$ over the finite field $\FF_L^{(2)}$. In \cite{elkies} several models defined over $\Q$ of classical modular curves are given, while in \cite{elkiesd} the reduction mod $T-1$ of the Drinfeld modular tower $X_0(T^n)_{n\ge 2}$ was described. We will use Schweizer's description of $X_0(T)$ to obtain explicit equations describing the modular tower $\FF(X_0(T^n))_{n\ge 2}$ in \emph{unreduced} form, i.e.~over $\FF$. For future usage, we make the following definition.

\begin{definition}\label{def:ff}
Let $n\ge 0$ be an integer. Then we define $E_n=\FF(X_0(T^{n+2}))$. Furthermore for a prime $L \in \F[T]$ different from $T$, we denote by $E_n^{(L)}$ the function field of the curve $X_0(T^{n+2})/\FF_L^{(2)}$.
\end{definition}

Schweizer's description of $\FF(X_0(T))$ and Proposition \ref{prop:depth one}, enables one to identify the function field $E_{0}=\FF(X_0(T^2))$ with $\FF(u_0,u_1)$ where $u_0$ and $u_1$ satisfy the equation
\begin{equation}\label{eq:schweizer}
\dfrac{(u_0+T^q)^{q+1}}{u_0^q}=\dfrac{(u_1+T)^{q+1}}{u_1}.
\end{equation}
As before, the variable $u_0$, resp. $u_1$ denotes the generator of the field $\FF(j_0(T),j_1(T))$, resp. the field $\FF(j_1(T),j_2(T))$. Since the genus of $X_0(T^2)$ is zero, our first task is to find a generator of its function field. From the discussion before Proposition \ref{prop:depth one} we see that Equation (\ref{eq:schweizer}) is not the equation of lowest degree connecting $u_0$ and $u_1$. Since
$$\dfrac{(u_0+T^q)^{q+1}}{u_0^q}-\dfrac{(u_1+T)^{q+1}}{u_1}=\left(u_0-\frac{T^{q+1}}{u_1}\right)\left(1+\frac{T^{q^2}}{u_0^q}
-\left(u_1-\frac{T^{q+1}}{u_0}\right)^{q-1}\left(\frac{u_1}{u_0}+\frac{T}{u_0}\right) \right),$$
we conclude that $u_0$ and $u_1$ satisfy the equation
$$1+\frac{T^{q^2}}{u_0^q}-\left(u_1-\frac{T^{q+1}}{u_0}\right)^{q-1}\left(\frac{u_1}{u_0}+\frac{T}{u_0}\right)=0,$$
which can be rewritten as
$$\frac{u_0+T^q}{u_1+T}=\left(\frac{u_0u_1-T^{q+1}}{u_0+T^q}\right)^{q-1}.$$

Hence the element $v_0=(u_0u_1-T^{q+1})/(u_0+T^q)$ is a generator of the function field $E_0=\FF(X_0(T^2))$, and in fact we find
$$u_0=v_0^{q-1}(v_0+T) \makebox{ and } u_1=\frac{(v_0+T)^q}{v_0^{q-1}}.$$
It will be convenient to use a slightly different generator, namely $\xi_0=-(v_0+T)/T$. In terms of $\xi_0$, we find
\begin{equation}\label{eq:correspondence}
u_0=-T^q(\xi_0+1)^{q-1}\xi_0 \makebox{ and } u_1=-T\frac{\xi_0^q}{(\xi_0+1)^{q-1}}.
\end{equation}
The function field $E_n$, $n\ge 0$ can be generated by elements $\xi_0,\dots,\xi_{n}$ satisfying the equations $$-T^q(\xi_{i}+1)^{q-1}\xi_i=-T\frac{\xi_{i-1}^q}{(\xi_{i-1}+1)^{q-1}} \quad (1\le i \le n)$$ which simplifies to the equations
\begin{equation}\label{eq:unreducedElkies}
(\xi_{i}+1)^{q-1}\xi_i=\frac{\xi_{i-1}^q}{T^{q-1}(\xi_{i-1}+1)^{q-1}} \quad (1\le i \le n).
\end{equation}
In fact we have shown the following:
\begin{theorem}
Denote for $n\ge 0$ by $E_{n+2}$ the function field of the Drinfeld modular curve $X_0(T^{n+2})$ over $\FF$. Then the tower of function fields $(E_n)_{n\ge 0}$ can be recursively given as follows:
$$E_0=\FF(\xi_0) \makebox{ and }  E_{n+1}=E_n(\xi_{n+1}),$$ with
$$(\xi_{n+1}+1)^{q-1}\cdot \xi_{n+1}=T^{q-1}\cdot \frac{\xi_n^q}{(\xi_n
+1)^{q-1}},$$
for $n\geq 0$.
\end{theorem}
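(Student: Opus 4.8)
The plan is to retrace, in the right order, the computation carried out in the paragraphs just preceding the statement: the theorem is essentially a summary of it, and the only external inputs are Schweizer's parametrisation \eqref{eq:jandjprime} of $\FF(X_0(T))$ and the structural results of Section~\ref{section:one}.

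First I would fix the scaffolding. By the discussion following Proposition~\ref{prop:depth one}, since $\deg T = 1$ both $X_0(T)$ and $X_0(T^2)$ are rational, so $E_n := \FF(X_0(T^{n+2}))$ is generated by uniformisers $v_0,\dots,v_n$, one for each copy of $\FF(X_0(T^2)) = \FF(j_i(T),j_{i+1}(T),j_{i+2}(T))$ inside $E_n$, and there are rational functions $\psi',\phi'$ of degree $q$ with $u_i = \psi'(v_i)$ and $u_{i+1} = \phi'(v_i)$, where $u_i$ is Schweizer's uniformiser of $\FF(j_i(T),j_{i+1}(T))$. The compatibility $\phi'(v_{i-1}) = u_i = \psi'(v_i)$ then recursively defines the tower, and \eqref{eq:extdegree} forces each step to have degree exactly $q$ (indeed $[\FF(X_0(T^{k+1})):\FF(X_0(T^k))]=q$). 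So the whole task reduces to making $\psi'$ and $\phi'$ explicit for $P=T$.

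For that I would start from Schweizer's formulas, which give the compatibility \eqref{eq:schweizer} between consecutive $u$'s; this has $u_1$-degree $q+1$, one too many. The \emph{key step} is the algebraic identity displayed just before \eqref{eq:correspondence}, which factors the difference of the two sides of \eqref{eq:schweizer} as $\bigl(u_0 - T^{q+1}/u_1\bigr)$ times a second factor; after clearing the denominators $u_0^q u_1$ this is a polynomial identity, verified directly using that $a\mapsto a^q$ is additive in characteristic $p$. The linear factor, $u_0 u_1 = T^{q+1}$, is the degenerate one: substituting it back one finds $j_{i+2}(T) = j_i(T)$, i.e.\ the $T^2$-isogeny collapses to multiplication by $T$; equivalently $u_0 u_1 = T^{q+1}$ would put $u_1 \in \FF(u_0)$, contradicting that each step of the tower is nontrivial of degree $q$. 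Hence the genuine tower is cut out by the second factor, which rewrites as $\dfrac{u_0+T^q}{u_1+T} = \Bigl(\dfrac{u_0 u_1 - T^{q+1}}{u_0+T^q}\Bigr)^{q-1}$. This suggests setting $v_0 = (u_0 u_1 - T^{q+1})/(u_0+T^q)$; I would then verify by back-substitution that $u_0 = v_0^{q-1}(v_0+T)$ and $u_1 = (v_0+T)^q/v_0^{q-1}$ solve this relation, that $v_0$ is transcendental, and that $\FF(v_0) = \FF(u_0,u_1) = E_0$ (both inclusions being immediate). The cosmetic substitution $\xi_0 = -(v_0+T)/T$ turns these into \eqref{eq:correspondence}, i.e.\ $\psi'(\xi) = -T^q(\xi+1)^{q-1}\xi$ and $\phi'(\xi) = -T\,\xi^q/(\xi+1)^{q-1}$, so that $\phi'(\xi_n) = \psi'(\xi_{n+1})$ simplifies at once to \eqref{eq:unreducedElkies}, the asserted recursion.

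The only step I expect to require care is the middle one: performing the factorisation cleanly, and --- more importantly --- arguing that it is the degree-$q$ factor, not the linear one, that describes $X_0(T^n)$. The clean justification is the observation just made (the linear factor forces $j_{i+2}=j_i$ and a trivial extension, contradicting the degree $q$ dictated by \eqref{eq:extdegree}), combined with the fact that the minimal-degree generating relation was exactly how $\FF(X_0(P^n))$ was described in Section~\ref{section:one}. Everything else --- transcendence of $v_0$, the identification $\FF(v_0)=\FF(u_0,u_1)$, and the affine reparametrisation $\xi_0=-(v_0+T)/T$ --- is bookkeeping.
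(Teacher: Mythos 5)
Your proposal is correct and follows essentially the same route as the paper's own derivation (Schweizer's parametrisation \eqref{eq:jandjprime}, the factorisation of \eqref{eq:schweizer}, the generator $v_0=(u_0u_1-T^{q+1})/(u_0+T^q)$, and the substitution $\xi_0=-(v_0+T)/T$ leading to \eqref{eq:unreducedElkies}). The only difference is that you spell out why the linear factor $u_0u_1=T^{q+1}$ must be discarded (it would force $j_2=j_0$ and a trivial step, contradicting the degree $q$ from \eqref{eq:extdegree}), a point the paper leaves implicit in its discussion preceding Proposition \ref{prop:depth one}.
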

Reducing modulo $T-1$, i.e.~putting $T=1$, we recover the equation found by Elkies in \cite{elkiesd}. Alternatively, still reducing modulo $T-1$, by making the change of variables $$\zeta_i=\frac{1}{\xi_i+1},$$ we recover the tower in \cite{bezerragarcia}.

Next we recall some facts from the literature on the tower $(X_0(T^n))_{n\ge 2}$ that we will need later on.

\begin{proposition}\label{prop:facts}
The genus of the function field $E_n$ is given by
$$g(E_n)=\dfrac{q^{n+2}+q^{n+1}-(q+1)(2+q+q^{2+\lfloor n/2 \rfloor}+q^{\lfloor (n-1)/2 \rfloor})}{q^2-1}.$$ This genus formula also holds after reduction modulo a prime different from $T$.
\end{proposition}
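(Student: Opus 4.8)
I would deduce the genus from the Riemann--Hurwitz formula applied to the covering $X_0(T^{n+2})\to X_0(T^2)$, i.e.\ to the extension $E_n/E_0$. Since $E_0=\FF(\xi_0)$ is rational (the genus of $X_0(T^2)$ being zero) and each step of the recursion adjoins a root of a polynomial of degree $q$, we have $[E_n:E_0]=q^n$, so $2g(E_n)-2=-2q^n+\deg\mathrm{Diff}(E_n/E_0)$ and everything reduces to a ramification analysis. The key structural point is that the recursion~(\ref{eq:unreducedElkies}) is built from a single degree-$q$ self-map of $\proj^1$, namely $\rho(w)=(w+1)^{q-1}w$: writing $t_{i-1}:=\rho(\xi_i)$, the relation~(\ref{eq:unreducedElkies}) becomes $t_{i-1}=\xi_{i-1}^q\big/\bigl(T^{q-1}(\xi_{i-1}+1)^{q-1}\bigr)=1\big/\bigl(T^{q-1}\rho(1/\xi_{i-1})\bigr)$, so each basic step $E_i/E_{i-1}$ is, up to the coordinate change $\xi_{i-1}\mapsto 1/\xi_{i-1}$ (which also absorbs the constant $T^{q-1}$), a base change of the $\rho$-covering. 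Since $\rho'(w)=(q-1)(w+1)^{q-2}w+(w+1)^{q-1}=(w+1)^{q-2}$, the only affine ramification point of $\rho$ is $w=-1$, lying over $\rho=0$ with $e=q-1$ (hence tame, different exponent $q-2$), while over $\rho=\infty$ there is only the point $w=\infty$, totally and wildly ramified with $e=q$; a short computation in the local coordinate $s=1/w$, where $1/\rho=s^q/(1+s)^{q-1}$ is Eisenstein, shows that over a simple pole of $\rho$ this wild place has different exponent exactly $q$.

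The first task is to pin down the ramification locus. I claim that all ramification of $E_n/E_0$ lies over the three $\FF$-rational places $\xi_0\in\{0,-1,\infty\}$ of $E_0$. Indeed, a place where the step $E_i/E_{i-1}$ ramifies must have $\xi_i\in\{-1,\infty\}$ there, and Equation~(\ref{eq:unreducedElkies}) then propagates this backwards by an elementary comparison of valuations: $\xi_i=-1\Rightarrow\xi_{i-1}=0$, $\xi_i=\infty\Rightarrow\xi_{i-1}\in\{-1,\infty\}$, and $\xi_i=0\Rightarrow\xi_{i-1}=0$, the implication for the value $0$ using $T\neq 0$. Consequently the ramified places of $E_n$ are organised into finitely many ``histories'': the single history $\infty^{\,n}$ above $\xi_0=\infty$; the history $(-1)\,\infty^{\,n-1}$ above $\xi_0=-1$; and, above $\xi_0=0$, the histories $0^{\,a}(-1)\,\infty^{\,b}$, together with the all-zero history, which produces an unramified place. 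These histories are the incarnation, in the recursive model, of the cusps of $X_0(T^{n+2})$.

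Next I would compute the different exponent of each such place and sum. Above $\xi_0=\infty$ there is a unique place of $E_n$, totally (wildly) ramified with $e=q^n$; iterating the basic Eisenstein computation (at every level $\rho(\xi_{i+1})$ again acquires a simple pole at the relevant place) and using the transitivity of the different yields $q+q^2+\cdots+q^n$. For the histories through $0$ and $-1$ one combines Abhyankar's lemma on the tame $e=q-1$ segment with a higher-ramification-group (conductor) computation on the wild $e=q$ segments; carrying this out and inserting the total different into Riemann--Hurwitz gives, after simplification, the claimed closed form. The floor functions $\lfloor n/2\rfloor,\lfloor(n-1)/2\rfloor$ enter exactly as in the analogous cusp count for $X_0(T^{n+2})$: the histories $0^{a}(-1)\infty^{b}$ are governed by the pair $(a,b)$ of essentially fixed sum, whose contributions organise themselves symmetrically under $a\leftrightarrow b$, so that the number of genuinely ramified places exhibits the expected parity dependence. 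Finally, every local computation above used only that the relevant parameter is nonzero --- true for $T$, and for its image $\gamma$ after reduction at any prime $L\neq T$ --- and the totally ramified place over $\xi_0=\infty$ survives reduction, so $[E_i^{(L)}:E_{i-1}^{(L)}]=q$ and the whole Riemann--Hurwitz count is unchanged; hence the formula holds after reduction modulo any prime $L\neq T$.

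The main obstacle is the wild-ramification bookkeeping in the third step: getting the different exponents of the $\infty$-segments precise enough that the linear-in-$n$ contributions of the three branches cancel and only the floor terms remain --- along such a segment the pertinent local extension is generated by an element of valuation $q-1$ rather than $1$, so the different is not simply the valuation of a derivative, and one must work with the ramification filtration (or exhibit a uniformiser). This is precisely the genus computation carried out by Bezerra and Garcia~\cite{bezerragarcia} for the reduced ($T=1$) tower (and, for the larger tower of the introduction, by Garcia and Stichtenoth~\cite{GSJNT}), and their argument transfers essentially verbatim. Alternatively one can bypass the tower analysis altogether and specialise the general genus formula for the Drinfeld modular curves $X_0(\mathfrak n)$ (Gekeler) to $\mathfrak n=T^{n+2}$: this amounts to counting the cusps and elliptic points of $X_0(T^{n+2})$, which makes the appearance of $\lfloor n/2\rfloor$ entirely transparent.
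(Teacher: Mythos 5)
Your plan is essentially the paper's own proof: the paper simply cites the known genus formula for $X_0(T^{n+2})$ from \cite{ge1,schweizer2} and remarks that one can alternatively adapt the explicit ramification computation of \cite{bezerragarcia} — exactly the two routes you propose, and your added details (ramification locus $\{0,-1,\infty\}$, the tame point at $\xi=-1$ and the wild, different-exponent-$q$ place over a simple pole, the behaviour under reduction at $L\neq T$) are consistent with that computation. So the proposal is correct and takes the same approach, merely spelling out more of the scaffolding that the paper delegates to the references.
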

\begin{proof}
This genus formula can be found in \cite{ge1,schweizer2}. Alternatively, one can adapt the explicit techniques used in \cite{bezerragarcia}.
\end{proof}

\begin{remark}\label{rem:optimaltowers1}
Let $L \in \F[T]$ be a prime different from $T$.  We denote by $\mathcal E^{(L)}$ the tower $(E_n^{(L)})_{n\ge 0}$. We denote
$\alpha\equiv T \pmod{L}$ and interpret it as an element of $\mathbb{F}_L$. Then Equation (\ref{eq:unreducedElkies}) implies that $\mathcal E^{(L)}$ is
recursively defined by
$E_0^{(L)}=\mathbb{F}_{L}^{(2)}(\xi_0)$ and $E_{n+1}^{(L)}=E_{n}^{(L)}(\xi_{n+1})$, where
$$(\xi_{n+1}+1)^{q-1}\xi_{n+1}=\frac{\xi_{n}^q}{\alpha^{q-1}(\xi_{n}+1)^{q-1}}.$$
\end{remark}

\section{The splitting locus of the Drinfeld modular tower $\mathcal{E}^{(L)}$}\label{section:three}

Since we have found an explicit description of the Drinfeld modular tower $\mathcal E^{(L)}$, we can obtain explicit information about its splitting locus. After that, using the modular interpretation of this tower, we will glean some information about supersingular Drinfeld modules in the next section. We will start by investigating a sequence of polynomials that turn out to be related to the splitting locus of $\mathcal E^{(L)}$.

\begin{definition}\label{def:polsplit}
For $i\ge -1$, we recursively define $p_i(s)\in\FF[s]$ by

$$p_{-1}(s)=0  \makebox{, } p_0(s)=1$$
and
\begin{equation}\label{eq:recursion}
p_{i+1}(s)=(s^{q^i}-1)p_i(s)+\frac{T^{q^i-1}-1}{T^{q^i-1}}s^{q^i}p_{i-1}(s).
\end{equation}
\end{definition}
For example we have $$p_1(s)=s-1, \ p_2(s)=s^{q+1}-s^{q}/T^{q-1}-s+1,$$ and $$p_3(s)=s^{q^2+q+1}-s^{q^2+q}/T^{q-1}-s^{q^2+1}/T^{q^2-1}+s^{q^2}/T^{q^2-1}-s^{q+1}+s^{q}/T^{q-1}+s-1.$$
Reducing $p_{i}(s)$ modulo a prime element $L \in \F[T]$, gives rise to a polynomial with coefficients in the finite field $\FF_L$. We will denote this polynomial by $p_i^{(L)}(s)$.

\begin{proposition}\label{prop:simple}
Let $d \ge 1$ be an integer and $L \in \F[T]$ be a prime different from $T$ of degree $d$. All roots of the polynomial $p_d^{(L)}(s)$ are simple. Moreover, $0$ is not a root of $p_d^{(L)}(s)$.
\end{proposition}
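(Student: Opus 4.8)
The idea is to prove both assertions at once by producing a ``Casoratian'' identity linking $p_i$, $p_{i-1}$ and their formal $s$-derivatives, which after reduction modulo $L$ collapses to a nonzero monomial in $s$.

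\emph{Step 1 (the value at $0$).} I first check, by induction on $i$ using the recursion (\ref{eq:recursion}), that $p_i(0)=(-1)^i$ for all $i\ge 0$: since $q^i\ge 1$, the term $s^{q^i}p_{i-1}(s)$ vanishes at $s=0$, so $p_{i+1}(0)=-p_i(0)$. In particular $p_d^{(L)}(0)=(-1)^d\ne 0$, which settles the last sentence of the proposition and also records that \emph{every} root of $p_d^{(L)}$ is nonzero (this will be used at the end).

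\emph{Step 2 (a parallel recursion for the derivative).} Abbreviate $c_i:=(T^{q^i-1}-1)/T^{q^i-1}$, so that (\ref{eq:recursion}) reads $p_{i+1}=(s^{q^i}-1)p_i+c_i s^{q^i}p_{i-1}$; note that $c_i$, and hence inductively every $p_i$, has coefficients that are $L$-integral for any prime $L\ne T$, so the reductions $p_i^{(L)}$ are defined. Differentiating in $s$ and observing that $\cha\FF$ divides $q^i$ for every $i\ge 1$, so that $\frac{d}{ds}s^{q^i}=0$ when $i\ge 1$, I obtain that $q_i:=p_i'$ satisfies the \emph{same} three-term recursion for indices $i\ge 1$:
$$q_{i+1}=(s^{q^i}-1)q_i+c_i s^{q^i}q_{i-1}\quad(i\ge 1),\qquad q_0=0,\ q_1=1.$$
Thus $(p_i)_{i\ge 0}$ and $(q_i)_{i\ge 0}$ are two solutions of one and the same linear recurrence once $i\ge 1$. (This is the ``small miracle'' of characteristic $p$; it fails in characteristic $0$.)

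\emph{Step 3 (the Casoratian).} Put $W_i:=p_{i+1}q_i-p_i q_{i+1}$. Substituting the common recursion for $p_{i+1}$ and $q_{i+1}$ gives in one line $W_i=-c_i s^{q^i}W_{i-1}$ for $i\ge 1$, while directly $W_0=p_1q_0-p_0q_1=-1$. Iterating,
$$W_{d-1}=p_d q_{d-1}-p_{d-1}q_d=(-1)^{d}\Big(\textstyle\prod_{j=1}^{d-1}c_j\Big)\,s^{\,q+q^2+\cdots+q^{d-1}}.$$

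\emph{Step 4 (reduction mod $L$ and conclusion).} Let $\alpha\equiv T\pmod L$. Since $L$ is prime of degree $d$ and $L\ne T$, $\alpha$ is a nonzero element of $\FF_L$ of degree exactly $d$ over $\F$, so $\alpha^{q^j}\ne\alpha$ whenever $d\nmid j$; hence $c_j^{(L)}=1-\alpha^{1-q^j}\ne 0$ for $1\le j\le d-1$. Therefore the identity of Step 3 reduces to
$$p_d^{(L)}q_{d-1}^{(L)}-p_{d-1}^{(L)}q_d^{(L)}=c\cdot s^{m},\qquad 0\ne c\in\FF_L,\ m\ge 0$$
(with $m=0$ exactly when $d=1$, where the statement is anyway trivial since $p_1=s-1$). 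As reduction commutes with $\frac{d}{ds}$, we have $q_d^{(L)}=(p_d^{(L)})'$. If $\beta$ were a multiple root of $p_d^{(L)}$ in an algebraic closure, then $p_d^{(L)}(\beta)=q_d^{(L)}(\beta)=0$, so the left-hand side vanishes at $\beta$, forcing $c\beta^m=0$ and hence $\beta=0$ --- contradicting Step 1. So $p_d^{(L)}$ has no repeated roots.

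The only genuinely new input is in Steps 2 and 3; the arithmetic of $L$ enters at a single point, the elementary fact that $T^{q^j-1}\not\equiv 1\pmod L$ for $1\le j<\deg L$, which is precisely what pins the ``good'' level to $d=\deg L$. I expect Step 3 --- getting the Casoratian recursion and its closed form right, including the exact exponent of $s$ and the sign --- to be the only place requiring real care; everything else is a direct manipulation of (\ref{eq:recursion}).
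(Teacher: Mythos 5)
Your proof is correct. It shares with the paper's argument the two pivotal observations --- that $p_i(0)=(-1)^i$, and that in characteristic $p$ the derivative sequence $q_i=p_i'$ satisfies the same three-term recursion for $i\ge 1$ --- but the endgame is packaged differently. The paper argues pointwise at a hypothetical multiple root $\rho$ of $p_d^{(L)}$: it chooses $(\lambda,\mu)\neq(0,0)$ with $\lambda \bigl(p_{d-1}^{(L)}\bigr)'(\rho)+\mu p_{d-1}^{(L)}(\rho)=0$ (the corresponding relation at level $d$ being automatic since $\rho$ is a double root), propagates this relation down the recursion using $c_i\not\equiv 0\pmod L$ for $1\le i\le d-1$ and $\rho\neq 0$, and reaches a contradiction at levels $0$ and $1$. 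You instead prove the global Casoratian identity $p_dq_{d-1}-p_{d-1}q_d=(-1)^d\bigl(\prod_{j=1}^{d-1}c_j\bigr)s^{\,q+\cdots+q^{d-1}}$ and evaluate it at $\rho$; the two mechanisms are linear-algebraically equivalent (your Casoratian at $\rho$ is precisely the determinant whose degeneracy the paper's $\lambda,\mu$ argument exploits), and both hinge on the same arithmetic input, namely $\alpha^{q^j}\neq\alpha$ for $1\le j<d$, which is exactly where $d=\deg L$ enters. What your version buys is a closed-form identity in $\FF[s]$ carrying extra information --- for instance it shows that $p_d^{(L)}$ and $p_{d-1}^{(L)}$ have no common root besides possibly $0$, hence are coprime; what the paper's version buys is brevity, since it never needs the sign, the constant, or the exact exponent of $s$. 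Your handling of the boundary cases (the recursion for $q_i$ only from $i\ge1$, the case $d=1$ where the exponent is $0$, and the $L$-integrality of the coefficients) is careful and complete.
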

\begin{proof}
It is easily seen by induction that $p_i(0)=(-1)^{i}$ for $i \ge 0$. Therefore $0$ is not a root of $p_d^{(L)}(t)$.

We denote by $p_i'(s)$ the derivative of $p_i(s)$ with respect to $s$. By taking the derivative on both sides of the equality sign in Equation (\ref{eq:recursion}), we see that the sequence $p'=(p_0'(s), p_1'(s),p_2'(s),\dots)$ satisfies the same recursion as the sequence $p=(p_0(s), p_1(s),p_2(s),\dots)$. The same holds for any linear combination $\lambda p'+\mu p$ of these sequences.

For convenience, we write $\alpha \equiv T \pmod{L} \in \mathbb{F}_{L}$. Now suppose that $\rho$ is a root of $p_i^{(L)}(s)$ in the algebraic closure of $\F$ of multiplicity greater than one. Since then $\rho$ is a common root of $p_d^{(L)}(s)$ and $\left(p_{d}^{(L)}\right)'(s)$, we can choose $\lambda$ and $\mu$, not both zero, such that $\lambda \left(p_{d-1}^{(L)}\right)'(\rho)+\mu p_{d-1}^{(L)}(\rho)=0$ and $\lambda \left(p_{d}^{(L)}\right)'(\rho)+\mu p_{d}^{(L)}(\rho)=0$. However, using the reduction modulo $L$ of Equation (\ref{eq:recursion}), this implies that $\lambda \left(p_{i}^{(L)}\right)'(\rho)+\mu p_{i}^{(L)}(\rho)=0$ for any $0 \le i \le d$. Here it is essential that $\alpha^{q^i} \neq 1$ for $0 \le i <d$, which holds, since $\alpha$ is a root of the irreducible polynomial $L$ of degree $d$.

For $i=0$, we find $0=\lambda \left(p_0^{(L)}\right)'(\rho)+\mu p_{0}^{(L)}(\rho)=\mu$, while for $i=1$, we find $0=\lambda \left(p_1^{(L)}\right)'(\rho)+\mu p_{1}^{(L)}(\rho)=\lambda+\mu(\rho-1)$. This implies that $\lambda=\mu=0$, a contradiction.
\end{proof}

It turns out that the polynomials $p_i(s)$ also can be defined by a recursion of depth one. For the sake of completeness, we state this recursion:

\begin{lemma}\label{lem:recdepthone}
Let $i\ge 1$ be an integer. The polynomials from Definition \ref{def:polsplit} satisfy:
\begin{equation}\label{eq:recdepthone}
p_i(s)=(-1)^{i+1}s^{\frac{q^i-1}{q-1}}p_{i-1}\left(\frac{1}{T^{q-1}s}\right)-p_{i-1}(s)
\end{equation}
and
\begin{equation}\label{eq:recdepthone2}
p_i\left(\frac{1}{T^{q-1}s}\right)=(-1)^{i+1}(sT^{q-1})^{-\frac{q^i-1}{q-1}}p_{i-1}\left(s\right)-p_{i-1}\left(\frac{1}{T^{q-1}s}\right)
\end{equation}
\end{lemma}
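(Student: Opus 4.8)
The plan is to prove the first identity~(\ref{eq:recdepthone}) by induction on $i$, and then to obtain the second identity~(\ref{eq:recdepthone2}) from it for free. Throughout, write $e_i:=\frac{q^i-1}{q-1}=1+q+\cdots+q^{i-1}$ (the $s$-degree of $p_i$) and $c_i:=\frac{T^{q^i-1}-1}{T^{q^i-1}}$, and record the elementary identities $e_{i+1}=e_i+q^i$, $(q-1)e_i=q^i-1$ and $1-c_i=T^{1-q^i}$. The first point to notice is that the substitution $s\mapsto 1/(T^{q-1}s)$ is an involution; it sends $p_j(s)$ to $p_j\!\left(1/(T^{q-1}s)\right)$ and $s^{k}$ to $T^{-(q-1)k}s^{-k}$, so that applying it to~(\ref{eq:recdepthone}) produces exactly~(\ref{eq:recdepthone2}). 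Hence it suffices to prove~(\ref{eq:recdepthone}) for all $i\ge1$, and once~(\ref{eq:recdepthone}) is known at level $i$ we may freely use~(\ref{eq:recdepthone2}) at level $i$ as well.

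The base case $i=1$ is immediate, since the right-hand side of~(\ref{eq:recdepthone}) equals $s\,p_0\!\left(1/(T^{q-1}s)\right)-p_0(s)=s-1=p_1(s)$. For the inductive step, assume~(\ref{eq:recdepthone}) --- hence also~(\ref{eq:recdepthone2}) --- at level $i$. Starting from the depth-two recursion~(\ref{eq:recursion}) in the form $p_{i+1}(s)=(s^{q^i}-1)p_i(s)+c_is^{q^i}p_{i-1}(s)$ and substituting $p_{i-1}(s)=(-1)^{i+1}s^{e_i}p_{i-1}\!\left(1/(T^{q-1}s)\right)-p_i(s)$ (the induction hypothesis, rearranged), a short manipulation using $1-c_i=T^{1-q^i}$ and $q^i+e_i=e_{i+1}$ gives
\[
p_{i+1}(s)+p_i(s)=T^{1-q^i}s^{q^i}p_i(s)+(-1)^{i+1}c_is^{e_{i+1}}p_{i-1}\!\left(1/(T^{q-1}s)\right).
\]
On the other hand, the identity we want at level $i+1$ is equivalent to $p_{i+1}(s)+p_i(s)=(-1)^{i}s^{e_{i+1}}p_i\!\left(1/(T^{q-1}s)\right)$; expanding $p_i\!\left(1/(T^{q-1}s)\right)$ here via~(\ref{eq:recdepthone2}) at level $i$ and simplifying the exponents (using $e_{i+1}-e_i=q^i$ and $(q-1)e_i=q^i-1$) rewrites it as
\[
p_{i+1}(s)+p_i(s)=-T^{1-q^i}s^{q^i}p_{i-1}(s)+(-1)^{i+1}s^{e_{i+1}}p_{i-1}\!\left(1/(T^{q-1}s)\right).
\]
Equating the two right-hand sides and simplifying --- using $1-c_i=T^{1-q^i}$ once more and then dividing by $T^{1-q^i}s^{q^i}$ --- the claim at level $i+1$ collapses to $p_i(s)+p_{i-1}(s)=(-1)^{i+1}s^{e_i}p_{i-1}\!\left(1/(T^{q-1}s)\right)$, which is precisely the induction hypothesis. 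This completes the induction, and~(\ref{eq:recdepthone2}) follows by the involutive substitution noted at the outset.

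Everything here is a formal manipulation of Laurent polynomials over $\FF$, and no conceptual obstacle arises: the whole statement is a consequence of the defining recursion~(\ref{eq:recursion}) together with the symmetry $s\mapsto 1/(T^{q-1}s)$. The only thing requiring real care is the bookkeeping of the exponents of $s$ and $T$ --- keeping $e_{i+1}=e_i+q^i$, $(q-1)e_i=q^i-1$ and $1-c_i=T^{1-q^i}$ straight --- and applying the substitution consistently, since it is exactly this symmetry that collapses the computation back to the induction hypothesis.
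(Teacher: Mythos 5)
Your proof is correct and takes essentially the same approach as the paper: deduce (\ref{eq:recdepthone2}) from (\ref{eq:recdepthone}) via the substitution $s\mapsto 1/(T^{q-1}s)$, then prove (\ref{eq:recdepthone}) by induction on $i$ using the recursion (\ref{eq:recursion}) together with both forms of the hypothesis at level $i$. Your bookkeeping (substituting for $p_{i-1}(s)$ and matching the two resulting expressions) differs only cosmetically from the paper's direct chain of equalities, which substitutes for $p_i(s)$ and recombines via (\ref{eq:recdepthone2}).
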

\begin{proof}
The second equation follows directly from the first by changing the variable $s$ to $1/(T^{q-1}s)$. We prove the first equation with induction. For $i=1$ the equation follows by direct computation. Now suppose that Equations (\ref{eq:recdepthone}) and (\ref{eq:recdepthone2}) hold for a certain $i \ge 1$. Then using Equation (\ref{eq:recursion}) and the induction hypothesis, we find that
\begin{multline*}
\begin{split}
p_{i+1}(s)+p_i(s) & =s^{q^i}p_i(s)+\frac{T^{q^i-1}-1}{T^{q^i-1}}s^{q^i}p_{i-1}(s)\\
           & =s^{q^i}\left( (-1)^{i+1}s^{\frac{q^i-1}{q-1}}p_{i-1}\left(\frac{1}{T^{q-1}s}\right)-p_{i-1}(s) \right)+\frac{T^{q^i-1}-1}{T^{q^i-1}}s^{q^i}p_{i-1}(s)\\
           & = (-1)^{i+1}s^{\frac{q^{i+1}-1}{q-1}}p_{i-1}\left(\frac{1}{T^{q-1}s}\right)-\frac{s^{q^i}}{T^{q^i-1}}p_{i-1}(s) = (-1)^{i+2}s^{\frac{q^{i+1}-1}{q-1}}p_i\left(\frac{1}{T^{q-1}s}\right).
\end{split}
\end{multline*}
\end{proof}

Now we return to our main task: to show the connection between the polynomials $p_i(s)$ and the splitting locus of $\mathcal E^{(L)}$. The following theorem contains a key identity:

\begin{theorem}\label{thm:keyresult}
Let $i\ge 0$ be an integer. Then
\begin{equation*}
p_i(s(s+1)^{q-1})-\left(T(s+1)\right)^{q^i-1}p_i\left(\frac{s^q}{(T(s+1))^{q-1}}\right)
  =(T^{q^i-1}-1)\left(s+1\right)^{q^i-1} p_{i-1}\left(\frac{s^q}{(T(s+1))^{q-1}}\right).
\end{equation*}
\end{theorem}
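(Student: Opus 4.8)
The plan is to prove the identity by induction on $i$, using the depth-two recursion (\ref{eq:recursion}) that defines the $p_i$. The base cases $i=0$ and $i=1$ are checked by direct computation: for $i=0$ both sides vanish since $p_0\equiv 1$ and $q^0-1=0$, and for $i=1$ one expands $p_1(x)=x-1$ on both arguments and verifies the polynomial identity in $s$ explicitly. For the inductive step, I would abbreviate $w=s(s+1)^{q-1}$ and $\tilde w=s^q/(T(s+1))^{q-1}$ — these are precisely the two "sides" of the correspondence in (\ref{eq:unreducedElkies}), and they satisfy the clean relations $w+1=(s+1)^q/ \cdot\text{(something)}$; more usefully, one has the key elementary fact that $s^{q^i}$ evaluated appropriately transforms nicely: note $w^{q^{i-1}}=s^{q^{i-1}}(s+1)^{q^i-q^{i-1}}$ and $\tilde w^{q^{i-1}} = s^{q^i}/(T(s+1))^{q^i-q^{i-1}}$.

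\textbf{Key steps.} Assume the identity holds for indices $i$ and $i-1$; I want it for $i+1$. Write $\mathcal{L}_i$ for the left-hand side $p_i(w)-(T(s+1))^{q^i-1}p_i(\tilde w)$ and $\mathcal{R}_i$ for the right-hand side $(T^{q^i-1}-1)(s+1)^{q^i-1}p_{i-1}(\tilde w)$, so the induction hypothesis is $\mathcal{L}_i=\mathcal{R}_i$ and $\mathcal{L}_{i-1}=\mathcal{R}_{i-1}$. Now apply the recursion (\ref{eq:recursion}) at $s\mapsto w$ and at $s\mapsto\tilde w$ to expand $p_{i+1}(w)$ and $p_{i+1}(\tilde w)$. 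This gives
$$p_{i+1}(w)=(w^{q^i}-1)p_i(w)+\tfrac{T^{q^i-1}-1}{T^{q^i-1}}w^{q^i}p_{i-1}(w),$$
and similarly for $\tilde w$. Substituting into $\mathcal{L}_{i+1}=p_{i+1}(w)-(T(s+1))^{q^{i+1}-1}p_{i+1}(\tilde w)$ and regrouping the $p_i$-terms and the $p_{i-1}$-terms separately, I would use $\mathcal{L}_i=\mathcal{R}_i$ and $\mathcal{L}_{i-1}=\mathcal{R}_{i-1}$ to collapse most of the expression. The crucial bookkeeping is to track the powers of $T$ and $(s+1)$: one needs the identities $w^{q^i}\cdot 1 = s^{q^i}(s+1)^{q^{i+1}-q^i}$, and $(T(s+1))^{q^{i+1}-1}\cdot\tilde w^{q^i}=T^{q^{i+1}-1-q^{i+1}+q^i}s^{q^{i+1}}(s+1)^{-1}=T^{q^i-1}s^{q^{i+1}}(s+1)^{-1}$, so that the mismatched normalizing factors between the $w$-side and the $\tilde w$-side of each term align into a common factor of $(s+1)^{q^{i+1}-1}$ times a power of $T$. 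After this alignment, the terms involving $p_i$ combine (using the induction hypothesis at level $i$) and the residual should match $\mathcal{R}_{i+1}=(T^{q^{i+1}-1}-1)(s+1)^{q^{i+1}-1}p_i(\tilde w)$. A useful cross-check at this stage is that $p_i(\tilde w)$ reappears on the right at level $i+1$ exactly where $p_i(w)-(\cdots)p_i(\tilde w)$ was controlled at level $i$; the recursion "shifts the index up by one" consistently on both sides.

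\textbf{Main obstacle.} The substantive difficulty is not conceptual but the exponent arithmetic: one must verify that the various powers $q^i$, $q^{i+1}-1$, $q^i-1$ of $T$ and of $(s+1)$ that arise from expanding the recursion at the two different arguments $w$ and $\tilde w$ telescope correctly, in particular that the factor $T^{q^{i+1}-1}$ appears on the right with exactly the combination $(T^{q^{i+1}-1}-1)$ and not some other expression. I expect the cleanest route is to first record the two auxiliary facts $w^{q^i}(s+1)^{-(q^{i+1}-q^i)}=s^{q^i}$ and $(T(s+1))^{q^{i+1}-q^i-1}\tilde w^{q^i}=T^{q^i-1}s^{q^{i+1}}(s+1)^{-q^i}$ as standalone lemmas (immediate from the definitions of $w$ and $\tilde w$), and then the induction step is a mechanical regrouping. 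Alternatively, if the depth-two induction proves awkward, one can instead induct using the depth-one recursion (\ref{eq:recdepthone}) of Lemma \ref{lem:recdepthone}, which expresses $p_i$ through a single lower term $p_{i-1}$ evaluated at $s$ and at $1/(T^{q-1}s)$; under the substitution $s\mapsto w$ the second argument $1/(T^{q-1}w)$ and under $s\mapsto \tilde w$ the argument $1/(T^{q-1}\tilde w)$ are related to each other and to $w,\tilde w$ in a way that may make the bookkeeping shorter, since only one lower index is involved. Either way, the identity is ultimately a formal consequence of the defining recursion together with the elementary algebraic relations between $s$, $s(s+1)^{q-1}$ and $s^q/(T(s+1))^{q-1}$.
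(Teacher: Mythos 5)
Your plan is essentially the paper's own proof: the paper argues by induction on $i$ (the cases $i=0,1$ being immediate), expands $p_{i+1}(s(s+1)^{q-1})$ via the recursion (\ref{eq:recursion}), invokes the induction hypothesis at levels $i$ and $i-1$, and then uses (\ref{eq:recursion}) at the argument $s^q/(T(s+1))^{q-1}$ to eliminate the lower-index terms $p_{i-2}$ and $p_{i-1}$ — the same two-level induction and regrouping you describe, so the step does close. The only corrections needed are in your recorded auxiliary identities: writing $\tilde{w}=s^q/(T(s+1))^{q-1}$, one has $(T(s+1))^{q^{i+1}-1}\tilde{w}^{q^i}=T^{q^i-1}s^{q^{i+1}}(s+1)^{q^i-1}$ (not $(s+1)^{-1}$) and $(T(s+1))^{q^{i+1}-q^i-1}\tilde{w}^{q^i}=s^{q^{i+1}}/(T(s+1))$ (not $T^{q^i-1}s^{q^{i+1}}(s+1)^{-q^i}$); with the exponents fixed, the bookkeeping aligns exactly as in the paper's computation.
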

\begin{proof}
We prove the theorem by induction on $i$. For $i=0$ and $i=1$, the theorem is trivial.


Now suppose that $i \ge 1$ and that the theorem is true for $i$ and $i-1$. Then
\begin{multline}\label{eq:intermediate}
\begin{split}
p_{i+1}(s(s+1)^{q-1})
& = ((s(s+1)^{q-1})^{q^i}-1 )p_i(s(s+1)^{q-1})+\frac{T^{q^i-1}-1}{T^{q^i-1}}(s(s+1)^{q-1})^{q^i}p_{i-1}(s(s+1)^{q-1}) \\
& = f_i \, p_{i}\left(\frac{s^q}{(T(s+1))^{q-1}}\right)+f_{i-1}  \, p_{i-1}\left(\frac{s^q}{(T(s+1))^{q-1}}\right)+f_{i-2}  \, p_{i-2}\left(\frac{s^q}{(T(s+1))^{q-1}}\right),
\end{split}
\end{multline}
with
\begin{multline*}
\begin{split}
f_i & =\left( (s(s+1)^{q-1})^{q^i}-1 \right)\left(T (s+1)\right)^{q^i-1} \\
f_{i-1} & =( (s(s+1)^{q-1})^{q^i}-1 )(T^{q^i-1}-1)(s+1)^{q^i-1} + \frac{T^{q^i-1}-1}{T^{q^i-1}}(s(s+1)^{q-1})^{q^i}(T(s+1))^{q^{i-1}-1} \\
f_{i-2} & =\frac{T^{q^i-1}-1}{T^{q^i-1}}(s(s+1)^{q-1})^{q^i}(T^{q^{i-1}-1}-1)(s+1)^{q^{i-1}-1}.
\end{split}
\end{multline*}
In the first equality we used Equation (\ref{eq:recursion}), in the second equality the induction hypothesis. Using Equation (\ref{eq:recursion}), with the variable $s$ replaced by $s^q/(T(s+1))^{q-1}$, to express $p_{i-2}$ in $p_{i-1}$ and $p_i$, we can rewrite Equation (\ref{eq:intermediate}) as
\begin{multline}\label{eq:intermediate2}
\begin{split}
p_{i+1}(s(s+1)^{q-1}) & = \left(f_i+(T^{q^i-1}-1)(s+1)^{q^{i+1}-1}\right)p_i\left(\frac{s^q}{(T(s+1))^{q-1}}\right)\\
&+(s+1)^{q^i-1}s^{q^{i+1}}(T^{q^i-1}-1)p_{i-1}\left(\frac{s^q}{(T(s+1))^{q-1}}\right)
\end{split}
\end{multline}
Using Equation (\ref{eq:recursion}) to express $p_{i-1}$ in terms of $p_{i}$ and $p_{i+1}$ on the right hand side of Equation (\ref{eq:intermediate2}), the theorem follows.
\end{proof}

We have the following consequence.

\begin{corollary}\label{cor:splitting}
Let $L \in \F[T]$ be a prime different from $T$ of degree $d\ge 1$. Let $K$ be compositum of $\mathbb{F}_{L}^{(2)}$ and the splitting field of the polynomial $p_d^{(L)}(s(s+1)^{q-1})$. Let $P$ be a place of $KE_0^{(L)}$ that is a zero of $p_d^{(L)}(\xi_0(\xi_0+1)^{q-1})$. Then the place $P$ splits in the tower $K\mathcal{E}^{(L)}$. Moreover, $$\frac{N(KE_n^{(L)})}{[E_n^{(L)}:E_0^{(L)}]}\ge q\frac{q^d-1}{q-1}.$$
\end{corollary}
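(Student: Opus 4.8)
The engine of the proof is the identity of Theorem~\ref{thm:keyresult}, specialized to $i=d$ and reduced modulo $L$. Since $L\ne T$, the element $\alpha\equiv T\pmod L$ is a nonzero element of $\FF_L=\F[T]/(L)$, a field with $q^d$ elements, so $\alpha^{q^d-1}=1$ and the factor $T^{q^d-1}-1$ on the right-hand side becomes $0$ after reduction. This leaves
$$p_d^{(L)}\big(s(s+1)^{q-1}\big)=\big(\alpha(s+1)\big)^{q^d-1}\,p_d^{(L)}\!\left(\frac{s^q}{(\alpha(s+1))^{q-1}}\right),$$
and since, by Remark~\ref{rem:optimaltowers1}, the tower relation reads $\xi_{n+1}(\xi_{n+1}+1)^{q-1}=\xi_n^q/(\alpha(\xi_n+1))^{q-1}$, substituting $s=\xi_n$ yields
$$p_d^{(L)}\big(\xi_n(\xi_n+1)^{q-1}\big)=\big(\alpha(\xi_n+1)\big)^{q^d-1}\,p_d^{(L)}\big(\xi_{n+1}(\xi_{n+1}+1)^{q-1}\big).$$
The plan is to use this identity to propagate, up the tower $K\mathcal E^{(L)}$, the condition that $\xi_n(\xi_n+1)^{q-1}$ be a root of $p_d^{(L)}$, and then to invoke Kummer's theorem step by step.

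For $n\ge 0$ let $\mathcal S_n$ denote the set of degree-one places $Q$ of $KE_n^{(L)}$ at which $p_d^{(L)}\big(\xi_n(\xi_n+1)^{q-1}\big)$ vanishes. At such a $Q$ the value $\xi_n(Q)\in K$ is finite and, since $0$ is not a root of $p_d^{(L)}$ by Proposition~\ref{prop:simple}, necessarily different from $0$ and $-1$. The main step is to show, by induction on $n$, that every $Q\in\mathcal S_n$ splits completely in $KE_{n+1}^{(L)}$ into $q$ places all lying in $\mathcal S_{n+1}$. Indeed, the minimal polynomial of $\xi_{n+1}$ over $KE_n^{(L)}$ is a scalar multiple of the monic polynomial $g(t)=t(t+1)^{q-1}-\xi_n^q/(\alpha(\xi_n+1))^{q-1}$ of degree $q$ (the one-step degree of the tower is $q$ by Equation~\ref{eq:extdegree}, and is unaffected by the constant extension to $K$); it has coefficients in the valuation ring of $Q$, and its reduction there is $\bar g(t)=t(t+1)^{q-1}-\gamma$ with $\gamma:=\xi_n(Q)^q/(\alpha(\xi_n(Q)+1))^{q-1}\in K$, well defined because $\xi_n(Q)\ne -1$ and $\alpha\ne 0$. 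By the identity above, $p_d^{(L)}(\gamma)=0$, so every root $t$ of $\bar g$ satisfies $p_d^{(L)}\big(t(t+1)^{q-1}\big)=0$, hence is a root of $p_d^{(L)}\big(s(s+1)^{q-1}\big)$ and therefore lies in $K$; moreover $\bar g$ is separable, since $\bar g'(t)=(t+1)^{q-2}$ while $t=-1$ is not a root of $\bar g$ (as $\gamma\ne 0$). Thus $\bar g$ splits into $q$ distinct linear factors over $K$, and Kummer's theorem shows that $Q$ is unramified in $KE_{n+1}^{(L)}$ and splits into exactly $q$ places of degree one; at each of them $\xi_{n+1}(\xi_{n+1}+1)^{q-1}=\gamma$ is a root of $p_d^{(L)}$, so each lies in $\mathcal S_{n+1}$. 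For the base case, note that because $K$ contains the splitting field of $p_d^{(L)}\big(s(s+1)^{q-1}\big)$, a zero $P$ of $p_d^{(L)}\big(\xi_0(\xi_0+1)^{q-1}\big)$ is a place $\xi_0=a$ with $a\in K$, hence of degree one, so $P\in\mathcal S_0$. Consequently $P$ splits completely in every $KE_n^{(L)}$, i.e.\ it splits in the tower $K\mathcal E^{(L)}$.

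For the numerical bound, observe that $\mathcal S_0$ is in bijection with the set of roots of $p_d^{(L)}\big(s(s+1)^{q-1}\big)$, all of which lie in $K$. This polynomial is monic of degree $q\cdot\frac{q^d-1}{q-1}$, and it is separable: writing
$$\frac{d}{ds}p_d^{(L)}\big(s(s+1)^{q-1}\big)=\big(p_d^{(L)}\big)'\big(s(s+1)^{q-1}\big)(s+1)^{q-2},$$
a multiple root would force either $s=-1$ (impossible, as $p_d^{(L)}(0)\ne 0$) or a multiple root of $p_d^{(L)}$ (impossible, by Proposition~\ref{prop:simple}). Hence $|\mathcal S_0|=q\cdot\frac{q^d-1}{q-1}$. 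Each $Q\in\mathcal S_0$ contributes $[KE_n^{(L)}:KE_0^{(L)}]$ distinct degree-one places of $KE_n^{(L)}$, and these collections are disjoint for distinct elements of $\mathcal S_0$. Since every step of the tower has degree $q$ over $K$ as well as over $\FF_L^{(2)}$ --- that is, the constant field does not grow --- we have $[KE_n^{(L)}:KE_0^{(L)}]=[E_n^{(L)}:E_0^{(L)}]$, and therefore $N(KE_n^{(L)})\ge q\cdot\frac{q^d-1}{q-1}\cdot[E_n^{(L)}:E_0^{(L)}]$, which is the assertion.

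The substantive work is entirely contained in Theorem~\ref{thm:keyresult}, which may be assumed here; what remains delicate is the clean application of Kummer's theorem at each step, for which one needs the reduced one-step polynomial $\bar g$ to stay of full degree $q$ and separable --- precisely the role of the observation $\gamma\ne 0$, itself forced by $0$ not being a root of $p_d^{(L)}$ --- together with the careful maintenance, along the induction, of the conditions ``$\xi_n(\xi_n+1)^{q-1}$ is a root of $p_d^{(L)}$'' and $\xi_n\ne -1$, which guarantee that the factor $\big(\alpha(\xi_n+1)\big)^{q^d-1}$ in the key identity is a unit. The only ingredient drawn from outside this section is the equality $[KE_n^{(L)}:KE_0^{(L)}]=[E_n^{(L)}:E_0^{(L)}]$, i.e.\ that the tower acquires no new constants over $K$, which is standard for these modular towers (cf.\ \cite{ge2}).
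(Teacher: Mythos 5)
Your proposal is correct and follows essentially the same route as the paper: the reduced identity from Theorem~\ref{thm:keyresult} shows that the value $\xi_n^q/(\alpha(\xi_n+1))^{q-1}$ is again a root of $p_d^{(L)}$, the one-step polynomial $t(t+1)^{q-1}-\gamma$ is separable with all roots in $K$, and iterating gives complete splitting, with the count coming from the separability of $p_d^{(L)}(s(s+1)^{q-1})$ via Proposition~\ref{prop:simple}. The only differences are presentational: you make the Kummer-theorem step and the absence of constant-field growth explicit, where the paper instead quotes Proposition~\ref{prop:genusunreducedjnt} to keep the splitting set away from the ramification locus and leaves the rest terse.
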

\begin{proof}
Define $$S:=\{ a \in K \,|\, p_d^{(L)}(a(a+1)^{q-1}) = 0 \}.$$ Since $p_d(0)\neq 0$, we have $a(a+1)^{q-1} \neq 0$ for any $a\in S$. From Proposition \ref{prop:genusunreducedjnt} we see therefore that $S$ is disjoint from the ramification locus of the tower $K\mathcal{E}^{(L)}$. As before, we denote by $\alpha\in \FF_L$ the reduction of $T$ modulo $L$. Moreover, for any $a\in S$, we denote by $P_a$ the zero of $\xi_0-a$. For an $a\in S$, the minimal polynomial of $\xi_1$ over $E_0^{(L)}$ modulo $P_a$ is given by $\tau(\tau+1)^{q-1}-\xi_0^q/(\alpha(\xi_0+1))^{q-1}$. Reducing at $P_a$, we obtain $\tau(\tau+1)^{q-1}-a^q/(T(a+1))^{q-1}$. We will show that this polynomial has $q$ distinct roots in $S$. It suffices to show this, since by iterating the argument, we then can conclude that $P_a$ splits completely in the tower.

Setting $s=\xi_0$ in Theorem \ref{thm:keyresult}, we find (after reducing modulo $L$ and $P_a$) that $$p_d^{(L)}(a(a+1)^{q-1}) = \left(\alpha(a+1)\right)^{q^d-1}p_d^{(L)}\left(\frac{a^q}{(\alpha(a+1))^{q-1}}\right).$$ Since $a(a+1)^{q-1} \neq 0$ and since $a \in S$, we obtain that $a^q/(\alpha(1+a))^{q-1}$ is a root of $p_d^{(L)}(s)$. By definition of $S$, we therefore conclude that there are $q$ possibilities for $b \in S$ such that $b(b+1)^{q-1}=a^q/(\alpha(1+a))^{q-1}$.

Now we prove the last statement. By Proposition \ref{prop:simple}, all roots of $p_d^{(L)}(s)$ are simple. Since by Proposition \ref{prop:simple}, $0$ is not a root of $p_d^{(L)}(s)$, we see that all roots of the polynomial $p_d^{(L)}(s(s+1)^{q-1})$ are simple as well. The statement now follows directly, since $\deg p_d(s(s+1)^{q-1})=q(q^d-1)/(q-1).$
\end{proof}

In the following section, by identifying the roots of $p_d^{(L)}(s(s+1)^{q-1})$ with supersingular Drinfeld modules in a particular family, we will show that $K=\mathbb{F}_{L}^{(2)}$, i.e. the polynomial $p_d^{(L)}(s(s+1)^{q-1})$ has all its roots in $\mathbb{F}_{L}^{(2)}$. Even more is true: we will show in the last section that any root of the polynomial $p_d^{(L)}(s(s+1)^{q-1})$ is a $(q-1)$-st power in $\mathbb{F}_{L}^{(2)}$. Using Theorem \ref{thm:supersingular} from the next section we can obtain the following optimality result:

\begin{corollary}\label{cor:splitting2}
Let $L \in \F[T]$ be a prime different from $T$ of degree $d$. All zeroes of the function $p_d^{(L)}(\xi_0(\xi_0+1)^{q-1})$ in $E_0^{(L)}$ are rational. The tower $\mathcal E^{(L)}=(E_n^{(L)})_{n\ge 0}$ has limit ${\mathcal E}^{(L)}=q^d-1$ and hence it is optimal.
\end{corollary}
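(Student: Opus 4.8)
The plan is to deduce Corollary \ref{cor:splitting2} from Corollary \ref{cor:splitting}, the genus formula of Proposition \ref{prop:facts}, and the Drinfeld--Vladut bound; the only essential non-elementary input will be Theorem \ref{thm:supersingular} of the next section, which is proved via the modular interpretation. That theorem will identify the roots of $p_d^{(L)}(s(s+1)^{q-1})$ with the supersingular Drinfeld modules in a suitable family, and since the latter are defined over $\FF_L^{(2)}$, it follows that \emph{all} roots of $p_d^{(L)}(s(s+1)^{q-1})$ lie in $\FF_L^{(2)}$. Thus the key consequence to quote is that the field $K$ of Corollary \ref{cor:splitting}, being the compositum of $\FF_L^{(2)}$ with the splitting field of $p_d^{(L)}(s(s+1)^{q-1})$, is simply $\FF_L^{(2)}$ itself.

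Granting this, the first assertion is immediate: any zero of the function $p_d^{(L)}(\xi_0(\xi_0+1)^{q-1})$ in $E_0^{(L)}=\FF_L^{(2)}(\xi_0)$ is the place $\xi_0=a$ for some $a\in\FF_L^{(2)}$, hence rational. Moreover, by Proposition \ref{prop:simple} the polynomial $p_d^{(L)}(s)$ has only simple roots and does not vanish at $0$, so $p_d^{(L)}(s(s+1)^{q-1})$ again has only simple roots; therefore the number of these rational zeros equals $\deg p_d^{(L)}(s(s+1)^{q-1})=q(q^d-1)/(q-1)$.

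Next I would feed $K=\FF_L^{(2)}$ back into Corollary \ref{cor:splitting}. Since then $K\mathcal{E}^{(L)}=\mathcal{E}^{(L)}$, each of these $q(q^d-1)/(q-1)$ rational places of $E_0^{(L)}$ splits completely in every $E_n^{(L)}$, and the inequality of that corollary reads $N(E_n^{(L)})\ge q^{n+1}(q^d-1)/(q-1)$, using $[E_n^{(L)}:E_0^{(L)}]=q^n$ (the value given by Equation (\ref{eq:extdegree}), unchanged by reduction modulo $L$). On the other hand, dropping the positive subtracted term in the formula of Proposition \ref{prop:facts} gives $g(E_n^{(L)})\le (q^{n+2}+q^{n+1})/(q^2-1)=q^{n+1}/(q-1)$. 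Dividing the two estimates yields $N(E_n^{(L)})/g(E_n^{(L)})\ge q^d-1$ for all $n\ge 1$, and since $g(E_n^{(L)})\to\infty$ this forces $\lambda(\mathcal{E}^{(L)})\ge q^d-1$. Finally the constant field $\FF_L^{(2)}$ has $q^{2d}$ elements, so the Drinfeld--Vladut bound gives $\lambda(\mathcal{E}^{(L)})\le q^d-1$; combining, $\lambda(\mathcal{E}^{(L)})=q^d-1$ and the tower is optimal.

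The hard part is not this bookkeeping but the external input Theorem \ref{thm:supersingular}: controlling the splitting field of $p_d^{(L)}(s(s+1)^{q-1})$ directly from the recursion of Definition \ref{def:polsplit} appears difficult, which is exactly why one routes through supersingular Drinfeld modules. A minor point still to justify is that $[E_n^{(L)}:E_0^{(L)}]=q^n$ is preserved under reduction at $L$; this is standard, and can alternatively be extracted from the recursive equation of Remark \ref{rem:optimaltowers1} (of $\xi_{n+1}$-degree $q$) together with the genus formula.
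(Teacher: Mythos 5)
Your proposal is correct and follows essentially the same route as the paper, whose proof is exactly the citation of Proposition \ref{prop:facts}, Corollary \ref{cor:splitting} and Theorem \ref{thm:supersingular}: the modular identification of the roots of $p_d^{(L)}(s(s+1)^{q-1})$ with supersingular points forces $K=\FF_L^{(2)}$, and then the splitting places, the genus bound $g(E_n^{(L)})\le q^{n+1}/(q-1)$ and the Drinfeld--Vladut bound give the limit $q^d-1$. Your explicit bookkeeping (including $[E_n^{(L)}:E_0^{(L)}]=q^n$ and its preservation under reduction) is precisely what the paper leaves implicit.
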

\begin{proof}
This follows from Proposition \ref{prop:facts}, Corollary \ref{cor:splitting} and Theorem \ref{thm:supersingular}.
\end{proof}

Alternatively, one could use the results in \cite{ge2} to obtain the above corollary, since $\mathcal E$ is the Drinfeld modular tower $(\F(X_0(T^{n+2})))_{n\ge0}$. However, we have found an explicit description of the tower $\mathcal E$ as well as its splitting locus as we will see.

\section{The $u$-line.}\label{section:u}

In this section we will consider the family of Drinfeld modules $\phi$ over $\F(T)$ of type:
\begin{equation}\label{eq:type}
\phi_T=u\tau^2+(u+T)\tau+T.
\end{equation}
Note that the $j$-invariant of $\phi$ is given as $(u+T)^{q+1}/u$. This is the same expression as for $j_0$ (in terms of $u_0$) in Equation (\ref{eq:jandjprime}). Let $L \neq T$ be any irreducible element of $\F[T]$ of degree $d$. Since we have seen that the function field of $X_0(T)$ is generated by $u$, the supersingular Drinfeld modules over $\FF_L$ of the above type correspond to the supersingular points on the Drinfeld modular curve $X_0(T)/\FF_{L}$ (these are by definition the points of $X_0(T)/\FF_L$ lying above supersingular points of $X(1)/\FF_L$). According to \cite{ge2} their number is given by $m_d:=(q^d-1)/(q-1)$, while the number of supersingular points of $X_0(T^n)/\FF_L$ is given by $q^{n-1}m_d$. Since the degree of the covering $X_0(T^n) \to X_0(T)$ is $q^{n-1}$, we can conclude that all supersingular points of $X_0(T)/\FF_L$ are unramified. Using the fact that all supersingular points of $X_0(T^n)/\FF_L$ are $\FF_L^{(2)}$-rational, we conclude that they split in the covering $X_0(T^n)/\FF_L^{(2)} \to X_0(T)/\FF_L^{(2)}$. On the other hand in Corollary \ref{cor:splitting}, we have exhibited $\deg(p_d(s))=m_d$ points defined over a suitable finite extension $K$ of $\FF_L^{(2)}$, which split in $X_0(T^n)/K \to X_0(T)/K$. We will now show that these sets of splitting points coincide and hence that $K=\FF_L^{(2)}$.

\begin{theorem}\label{thm:supersingular}
Supersingular Drinfeld modules over $\FF_L$ of type (\ref{eq:type}) are in one to one correspondence with supersingular points of $X_0(T)/\FF_L$. Moreover, they correspond to values of $a \in \FF_L^{(2)}$ satisfying $p_d^{(L)}(a) = 0$. In particular, all roots of $p_d^{(L)}(s) $ are in $\FF_L^{(2)}$.
\end{theorem}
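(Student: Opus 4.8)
The first assertion is essentially a translation of Schweizer's description of $X_0(T)$ recalled in Section~\ref{section:two}: under $\FF(X_0(T))=\FF(u)$ the point with parameter $u=a$ carries the Drinfeld module $a\tau^2+(a+T)\tau+T$, whose reduction modulo $L$ is of type~(\ref{eq:type}) and has the stated $j$-invariant $(a+T)^{q+1}/a$. Hence the geometric points of $X_0(T)/\FF_L$ are exactly the values of the parameter, and the supersingular points are exactly those for which the corresponding module of type~(\ref{eq:type}) is supersingular. By~(\ref{eq:correspondence}), a parameter value $u$ corresponds to the value $s=\xi_0(\xi_0+1)^{q-1}=-u/T^q$ at which $p_d^{(L)}$ is evaluated in Corollary~\ref{cor:splitting}; so, after this harmless change of coordinate, proving the remaining assertions amounts to identifying the supersingular locus of $X_0(T)$ with the zero locus of $p_d^{(L)}$.

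I would argue on the curve $X_0(T^2)$. On one side, Corollary~\ref{cor:splitting} produces the $q\,m_d$ zeros of $p_d^{(L)}(\xi_0(\xi_0+1)^{q-1})$; by Proposition~\ref{prop:simple} (since $0$ is not a root of $p_d^{(L)}$) these are simple, and they split completely in $K\mathcal{E}^{(L)}$. On the other side, by \cite{ge2} there are $q^{n-1}m_d$ supersingular points on $X_0(T^n)/\FF_L$; they are $\FF_L^{(2)}$-rational, and the point count (together with isogeny-invariance of supersingularity) forces them to be unramified in the tower and to split completely over $\FF_L^{(2)}$. The bridge between the two sides is the following fact, the one place where the modular interpretation is genuinely used: a place of $X_0(T^2)$ (over any finite extension of $\FF_L$) splits completely in the tower $(X_0(T^{n+2}))_n$ if and only if the underlying Drinfeld module is supersingular. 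For this, note that at a completely split place the Frobenius of the underlying module fixes the whole tree of cyclic $T^\infty$-subgroups, hence acts as a homothety on the $T$-adic Tate module, hence (being integral over $\F[T]$ inside $\mathrm{End}$) is multiplication by a polynomial divisible by $L$ --- which makes the $L$-torsion entirely connected, i.e.\ the module supersingular; conversely an ordinary module has $L$ split in the field generated by its regular semisimple Frobenius, so its Frobenius is never a homothety.

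Granting this, every one of the $q\,m_d$ zeros of $p_d^{(L)}(\xi_0(\xi_0+1)^{q-1})$ is completely split, hence has supersingular underlying module, hence is $\FF_L^{(2)}$-rational on $X_0(T^2)$; thus the splitting field of $p_d^{(L)}(s(s+1)^{q-1})$ is contained in $\FF_L^{(2)}$, i.e.\ $K=\FF_L^{(2)}$. Since $p_d^{(L)}(\xi_0(\xi_0+1)^{q-1})$ has exactly $q\,m_d$ simple zeros and $X_0(T^2)/\FF_L$ has exactly $q\,m_d$ supersingular points, the two coincide; projecting along $X_0(T^2)\to X_0(T)$, whose fibre over each of these points has $q$ elements, identifies the $m_d$ roots of $p_d^{(L)}(s)$ with the $m_d$ supersingular points of $X_0(T)$, all of which are $\FF_L^{(2)}$-rational. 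This yields the claimed bijection, the characterisation $p_d^{(L)}(a)=0$, and in particular that every root of $p_d^{(L)}(s)$ lies in $\FF_L^{(2)}$.

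The hard part is the equivalence ``completely split $\iff$ supersingular'' for the $T$-power tower, specifically its nontrivial half --- excluding ordinary points. One can instead bypass the tower by a direct Hasse-invariant computation: the module of type~(\ref{eq:type}) with parameter $u$ is supersingular iff the coefficient of $\tau^d$ in $\phi_L=L(\phi_T)$ vanishes, and one verifies (explicitly for small $d$, by induction in general) that this coefficient is a nonzero scalar times $p_d^{(L)}(-u/T^q)$; the delicate point there is organising the induction on $d$, since the prime $L$ changes with its degree.
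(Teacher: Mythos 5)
Your overall framework is the same as the paper's: exhibit two sets of points of the same cardinality (the zeros of $p_d^{(L)}$ and the supersingular points, $m_d$ of each on $X_0(T)$, or $q\,m_d$ of each on $X_0(T^2)$), use Gekeler's count from \cite{ge2} to see that the supersingular points are unramified, $\FF_L^{(2)}$-rational and totally split, and reduce everything to showing that every zero of $p_d^{(L)}$ is supersingular. Where you genuinely diverge is in how that last inclusion is proved. The paper never looks at Tate modules: it notes that the $q^{n-1}m_d$ supersingular $\FF_L^{(2)}$-rational points together with the genus formula of Proposition \ref{prop:facts} already make the tower $(X_0(T^n)/\FF_L^{(2)})_n$ optimal, and then kills a hypothetical non-supersingular zero of $p_d^{(L)}$ (which by Corollary \ref{cor:splitting} splits totally over some finite extension $K$, hence contributes asymptotically many places of some bounded degree over $\FF_L^{(2)}$) by the generalized Drinfeld--Vladut bound of Tsfasman--Vladut \cite{genDV}. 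That argument is soft and absorbs the unknown constant field $K$ precisely because the generalized bound controls places of all degrees at once. Your route instead proves the sharper dichotomy ``totally split in the $T$-power tower $\iff$ supersingular'' directly, via the action of Frobenius on the $T$-adic Tate module; this buys a self-contained, non-asymptotic argument and extra information, at the price of real Drinfeld-module machinery that the paper deliberately avoids.

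If you carry out your route, a few steps need tightening. First, total splitting in $(X_0(T^{n+2}))_n$ over a point of $X_0(T^2)$ only forces Frobenius to stabilize the cyclic $T^{n+2}$-submodules \emph{extending the fixed level-$T^2$ structure}, not the ``whole tree''; this still forces a homothety (an operator of the rank-two Tate module fixing every line in a fixed residue disc is scalar), but that is an additional small argument you should spell out. Second, $X_0$ is a coarse moduli space, so passing from a $K$-rational point to a Drinfeld module with Frobenius-stable submodules defined over $K$ (or a controlled extension) needs a word about twists/automorphisms; likewise the Frobenius you end up with is the $K$-Frobenius, i.e.\ a power of the $\FF_L$-Frobenius, and the supersingularity criterion must be applied to that power. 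Third, in your final deduction ``multiplication by a polynomial divisible by $L$ hence supersingular'', divisibility by $L$ alone is not enough: you need that this power of Frobenius is purely inseparable, which forces the polynomial to be a unit times a power of $L$ and $\phi_L$ itself to be purely inseparable. Finally, the alternative Hasse-invariant computation (that the coefficient of $\tau^d$ in $\phi_L$ is a nonzero scalar times $p_d^{(L)}(-u/T^q)$) would indeed be the cleanest identification of $p_d^{(L)}$ as the Deuring-type polynomial of the family (\ref{eq:type}), but as you yourself note the induction on $d$ is the entire difficulty, so as written it is a claim rather than a proof.
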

\begin{proof}
It is sufficient to show that the two above mentioned sets of points (each of size $m_d$) of $X_0(T)/\FF_L$ are identical. Since they have the same cardinality $m_d$, it it enough to show that each root of $p_d^{(L)}(s)$ is a supersingular point. Assume there exists a point $Q$ with $u(Q)$ a root of $p_d^{(L)}(s)$, which is not supersingular. Since the roots of $p_d^{(L)}(s)$ split in the tower $(X_0(T^n)/K)_{n\ge 1}$ for a suitable finite extension $K$ of $\FF_L^{(2)}$, such a point $Q$ would give an asymptotically nontrivial number of $K$-rational points at each level (compared to the degree of the covering). However there are $q^{n-1}m_d$  supersingular points of $X_0(T^n)/\FF_L$, each $\FF_L^{(2)}$-rational. Together with the genus formula in Proposition \ref{prop:facts}, we see that the tower $(X_0(T^n)/\FF_L^{(2)})_{n\ge 1}$ is optimal. By the generalisation of the Drinfeld--Vladut bound by Tsfasman--Vladut given in \cite{genDV} such a point $Q$ cannot exist.
\end{proof}

By Equation \ref{eq:correspondence}, the correspondence in the above theorem can be given explicitly as follows: A supersingular value of the uniformizer $u$ corresponds to the root $-u/T^q \pmod{L}$ of $p_d^{(L)}(s)$. Considering the splitting points in the covering $X_0(T^2)/\FF_L^{(2)} \to X_0(T)/\FF_L^{(2)}$ we immediately obtain the following:

\begin{corollary}
The roots of the polynomial $p_d^{(L)}(s(s+1)^{q-1})$ are in $\FF_L^{(2)}$ and correspond to supersingular points of $X_0(T^2)/\FF_L$.
\end{corollary}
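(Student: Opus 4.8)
The plan is to match the $q\,m_d$ simple roots of $p_d^{(L)}(s(s+1)^{q-1})$ (where $m_d=(q^d-1)/(q-1)=\deg p_d(s)$, so $\deg p_d(s(s+1)^{q-1})=q\,m_d$) one-to-one with the supersingular points of $X_0(T^2)/\FF_L$, pushing everything down to $X_0(T)$ where Theorem~\ref{thm:supersingular} is available. First I would recall from the discussion preceding Theorem~\ref{thm:supersingular} the relevant count: $X_0(T^2)/\FF_L$ has exactly $q\,m_d$ supersingular points, all $\FF_L^{(2)}$-rational, each lying above a (supersingular) point of $X_0(T)/\FF_L$, with the degree-$q$ covering $X_0(T^2)\to X_0(T)$ unramified over the supersingular locus (so precisely $q$ supersingular points of $X_0(T^2)$ sit above each of the $m_d$ supersingular points of $X_0(T)$).

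Next, let $K$ be the finite extension of $\FF_L^{(2)}$ from Corollary~\ref{cor:splitting}, and let $a\in K$ be a root of $p_d^{(L)}(s(s+1)^{q-1})$. Since $p_d(0)=(-1)^d$, the value $a(a+1)^{q-1}$ is nonzero, hence $a\notin\{0,-1\}$. Let $P_a$ be the place of $X_0(T^2)/K$ where $\xi_0=a$. By Equation~(\ref{eq:correspondence}) the uniformizer $u$ of $X_0(T)$ satisfies $u_0=-T^q(\xi_0+1)^{q-1}\xi_0$, so at $P_a$ it takes the finite nonzero value $u_0(P_a)=-\alpha^q a(a+1)^{q-1}$, where $\alpha\equiv T\pmod L$; thus $-u_0(P_a)/\alpha^q=a(a+1)^{q-1}$ is a root of $p_d^{(L)}(s)$. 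By Theorem~\ref{thm:supersingular} together with the explicit form of its correspondence (a supersingular value $u$ corresponds to the root $-u/T^q\bmod L$ of $p_d^{(L)}(s)$), this says $u_0(P_a)$ is a supersingular value of $u$, i.e. the image of $P_a$ in $X_0(T)$ is supersingular. Hence $P_a$ is one of the supersingular points of $X_0(T^2)/\FF_L$.

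Finally I would conclude by counting. By Corollary~\ref{cor:splitting} (via Proposition~\ref{prop:simple}) all $q\,m_d$ roots of $p_d^{(L)}(s(s+1)^{q-1})$ are simple, and distinct roots $a$ give distinct places $P_a$ (the zero of $\xi_0-a$). So $a\mapsto P_a$ is an injection from the set of $q\,m_d$ roots into the set of $q\,m_d$ supersingular points of $X_0(T^2)/\FF_L$, hence a bijection, establishing the asserted correspondence. Since those supersingular points are all $\FF_L^{(2)}$-rational, each $P_a$ is $\FF_L^{(2)}$-rational, whence $a=\xi_0(P_a)\in\FF_L^{(2)}$; in particular $K=\FF_L^{(2)}$ in Corollary~\ref{cor:splitting}. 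The only delicate point is the bookkeeping of the covering degree, i.e.\ verifying that the $q\,m_d$ roots exactly fill out the $q\,m_d$ supersingular points rather than a larger locus, but this is forced by the two degree counts together with the simplicity of the roots, so no further computation is needed.
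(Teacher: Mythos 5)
Your argument is correct and follows essentially the same route as the paper: the paper obtains this corollary by looking at the supersingular points of $X_0(T)/\FF_L^{(2)}$ (identified with roots of $p_d^{(L)}$ via Theorem~\ref{thm:supersingular}) splitting in the degree-$q$ covering $X_0(T^2)\to X_0(T)$ given by Equation~(\ref{eq:correspondence}), together with Gekeler's count $q\,m_d$ and the $\FF_L^{(2)}$-rationality of the supersingular points of $X_0(T^2)$. You traverse the same correspondence in the opposite direction (pushing each root $a$ down to a supersingular $u$-value and then counting), which is only an organizational difference, not a different method.
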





The polynomials $p_d(s)$ can be seen as analogues of the Deuring polynomial for Drinfeld modules of type (\ref{eq:type}). Remarkably, the recursion in Definition \ref{def:polsplit} relates the Deuring polynomials associated with different primes. Summing up we have the following:

\begin{corollary}
As in Definition \ref{def:polsplit}, let $p_{-1}(s)=0$, $p_0(s)=1$ and
$$p_{i+1}(s)=(s^{q^i}-1)p_i(s)+\frac{T^{q^i-1}-1}{T^{q^i-1}}s^{q^i}p_{i-1}(s).$$
Let $L\in \FF[T]$ be a prime element of degree $d\ge 1$. The Drinfeld module given by $\phi_T=u\tau^2+(u+T)\tau+T$ is supersingular for the prime $L$ if and only if $p_d(-u/T^q) \equiv 0 \pmod{L}$.
\end{corollary}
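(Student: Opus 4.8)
The plan is to deduce this final Corollary directly from \textbf{Theorem \ref{thm:supersingular}} together with the explicit form of the correspondence given in Equation \ref{eq:correspondence}. Theorem \ref{thm:supersingular} already asserts that the supersingular Drinfeld modules over $\FF_L$ of type $\phi_T = u\tau^2 + (u+T)\tau + T$ correspond precisely to the values $a \in \FF_L^{(2)}$ with $p_d^{(L)}(a) = 0$. The only remaining work is to make the substitution $a = -u/T^q$ explicit and to check that this substitution is a bijection on the relevant sets, so that ``$\phi_T$ is supersingular for $L$'' translates into ``$p_d(-u/T^q) \equiv 0 \pmod L$''.

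First I would recall, as already noted in the text just after the proof of Theorem \ref{thm:supersingular}, that by Equation \ref{eq:correspondence} we have $u_0 = -T^q(\xi_0 + 1)^{q-1}\xi_0$, so the uniformizer $u$ of $X_0(T)$ and the generator $\xi_0$ of $E_0$ are related, on the modular curve $X_0(T^2)$, by $u = -T^q(\xi_0+1)^{q-1}\xi_0$; in particular on $X_0(T)$ itself the relevant substitution at the level of the supersingular points is $\xi_0 \mapsto -u/T^q$ modulo $L$ after the appropriate identification. The key point is that Theorem \ref{thm:supersingular} phrases supersingularity of $\phi_T$ in terms of the root $a$ of $p_d^{(L)}$, and the sentence ``A supersingular value of the uniformizer $u$ corresponds to the root $-u/T^q \pmod L$ of $p_d^{(L)}(s)$'' is exactly the dictionary we need. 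So the proof is a one-line invocation: $\phi_T$ of the given type is supersingular for $L$ $\iff$ the associated value of $u$ is a supersingular point of $X_0(T)/\FF_L$ $\iff$ (by the explicit correspondence) $p_d^{(L)}(-u/T^q) = 0$, i.e.\ $p_d(-u/T^q) \equiv 0 \pmod L$.

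I would also add a brief sanity check that the substitution $u \mapsto -u/T^q$ is well-defined and injective on the set of $u$-values in question: since $L \neq T$, the element $T$ is invertible modulo $L$, so $-u/T^q \pmod L$ makes sense and the map $u \mapsto -u/T^q$ is a bijection of $\overline{\FF_L}$ (indeed of $\FF_L^{(2)}$), hence it carries the $m_d$ supersingular $u$-values bijectively onto the $m_d = \deg p_d^{(L)}$ roots of $p_d^{(L)}(s)$, all of which are simple by Proposition \ref{prop:simple} and lie in $\FF_L^{(2)}$ by Theorem \ref{thm:supersingular}. This counting argument is the same one used in the proof of Theorem \ref{thm:supersingular} and requires nothing new.

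Honestly, there is no real obstacle here: the Corollary is a restatement of Theorem \ref{thm:supersingular} unwound through Equation \ref{eq:correspondence}, and the ``hard'' content (that the roots of $p_d^{(L)}$ really are the supersingular locus, and that they are defined over $\FF_L^{(2)}$) has already been established. The only thing to be careful about is bookkeeping with the change of variable $\xi_0 = -(u/T^q)$ versus the normalization $\xi_0 = -(v_0+T)/T$ used earlier, and making sure the sign and the power $T^q$ match what appears in Equation \ref{eq:jandjprime} and Equation \ref{eq:correspondence}; once that is checked, the proof is complete. One could write it as: ``\emph{Proof.} Immediate from Theorem \ref{thm:supersingular} and the explicit correspondence $u \leftrightarrow -u/T^q \pmod L$ described after its proof, which is obtained from Equation \ref{eq:correspondence}. $\qed$''
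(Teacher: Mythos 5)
Your proposal is correct and follows the same route as the paper, which offers no separate proof: the corollary is just Theorem \ref{thm:supersingular} combined with the explicit dictionary coming from Equation (\ref{eq:correspondence}), exactly as you say. One bookkeeping correction: the substitution is not $\xi_0 = -u/T^q$ but rather $\xi_0(\xi_0+1)^{q-1} = -u/T^q$, since Equation (\ref{eq:correspondence}) gives $u_0 = -T^q\,\xi_0(\xi_0+1)^{q-1}$ and the splitting/supersingular condition of Corollary \ref{cor:splitting} is the vanishing of $p_d^{(L)}$ evaluated at $\xi_0(\xi_0+1)^{q-1}$, which is precisely why the argument of $p_d$ becomes $-u/T^q$; with that identification your chain ``$\phi_T$ supersingular $\iff$ $u$ is a supersingular point of $X_0(T)/\FF_L$ $\iff$ $p_d^{(L)}(-u/T^q)=0$'' and the counting of the $m_d$ simple roots go through verbatim.
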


Note that in \cite{ge3} Gekeler gives and studies an analogue of the Deuring polynomial for the family of Drinfeld modules given by $\phi_T=\tau^2+\lambda\tau+T$. As the $u$-line above, the $\lambda$-line in \cite{ge3} is a degree $q+1$ covering of the $j$-line given however by the formula $j=\lambda^{q+1}$.

\section{A supertower of $(X_0(T^n))_{n \ge 2}$}\label{section:four}

In the previous section we found explicit equations for the function fields in the modular tower $(X_0(T^n))_{n\ge 2}$. Now we will describe a tower of function fields (with constant field $\FF$) which turns out to be a supertower of the previous one and show that after reduction modulo a prime $\ell$ different from $T$, one obtains an optimal tower. The optimal tower found by Garcia and Stichtenoth in \cite{GSJNT}, will turn out to be a special case of this construction.

\begin{definition}\label{def:unreducedGSjnt}
Define the tower ${\mathcal F}=(F_n)_{n\ge 0}$ of function fields recursively by
$F_0=\FF(x_0)$ and $F_i=F_{i-1}(x_i)$, where
\begin{equation}\label{eq:unreducedGSjnt}
x_{i}^q+x_i=\frac{x_{i-1}^q}{T(x_{i-1}^{q-1}+1)}.
\end{equation}
\end{definition}

By raising to the power $q-1$, it is easily seen that Equation (\ref{eq:unreducedGSjnt}) is connected to Equation (\ref{eq:unreducedElkies}) by the relation $\xi_j=x_j^{q-1}$ for all $j$. Since $$x_i=\frac{x_{i-1}^q}{T(x_{i-1}^{q-1})(\xi_i+1)},$$ this implies that the extension $F_n/E_n$ function fields is a Kummer extension of degree $q-1$. In fact the tower $\mathcal F$ can be obtained by taking the composite over $\FF(\xi_0)$ of the tower $\mathcal E$ and the function field $\FF(x_0)$. The tower $\mathcal F$ is therefore a supertower of $\mathcal E$. We will start the investigation of $\mathcal F$ by determining its ramification locus.

\begin{proposition}\label{prop:genusunreducedjnt}
The ramification locus of the tower $\mathcal F$ is equal to the set consisting of the pole and the zeroes of the function $x_0^q+x_0$. The genus of $F_n$ satisfies:
$$\frac{g(F_n)-1}{[F_n:F_0]} \le q.$$
\end{proposition}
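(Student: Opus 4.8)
The plan is to analyze the tower $\mathcal{F}$ defined by $x^q + x = \frac{x_0^q}{T(x_0^{q-1}+1)}$ as an Artin--Schreier-type tower, following the template of \cite{GSJNT} and \cite{bezerragarcia}. First I would determine the ramification locus. Each step $F_{i-1}(x_i)/F_{i-1}$ is given by $x_i^q + x_i = \frac{x_{i-1}^q}{T(x_{i-1}^{q-1}+1)}$, which is an Artin--Schreier extension (the map $z \mapsto z^q + z$ is additive with kernel $\mathbb{F}_q$). A place of $F_{i-1}$ ramifies in this step precisely when the right-hand side $\frac{x_{i-1}^q}{T(x_{i-1}^{q-1}+1)}$ has a pole there, i.e.\ at the pole of $x_{i-1}$ and at the zeroes of $x_{i-1}^{q-1}+1$. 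The strategy is then the usual one: introduce the set $V = \{$pole of $x_0\} \cup \{$zeroes of $x_0^q + x_0\}$ (note the zeroes of $x_0^q+x_0$ are exactly $x_0 = 0$ together with the $q-1$ roots of $x_0^{q-1}+1=0$), and prove by induction on $n$ that the ramification locus of $F_n/F_0$ is contained in the set of places lying over $V$. The inductive step requires showing that whenever a place $P$ of $F_n$ lies over a point in $V$ (or is unramified over $F_0$), the value $x_n(P)$ again lands in $\{\infty\} \cup \{0\} \cup \{$roots of $s^{q-1}+1\}$, so that the next step ramifies only over places already lying over $V$; this is the ``self-similarity'' computation and is where one checks, for instance, that if $x_{n-1}$ has a pole at $P$ then $x_n$ has a pole, if $x_{n-1}(P) = 0$ then $x_n(P) = 0$, and if $x_{n-1}(P)^{q-1} = -1$ then the right side has a pole so $x_n(P) = \infty$. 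Because $\xi_j = x_j^{q-1}$ relates $\mathcal{F}$ to $\mathcal{E}$, one can alternatively transport this analysis through the degree-$(q-1)$ Kummer extension, but doing it directly on the $x_j$ is cleaner.

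Next I would bound the genus. Having pinned down the ramification locus $R_n$ (the places of $F_n$ over $V$), I would compute the different of $F_n/F_0$ via the transitivity of the different and the conductor-discriminant / transitivity formula for the tower $F_0 \subset F_1 \subset \cdots \subset F_n$. For each Artin--Schreier step the different exponent at a ramified place is $(q-1)(m+1)$ where $m$ is the pole order of the right-hand side after adjusting by an additive polynomial to make it minimal; here the relevant pole orders are all $1$ (one checks $\frac{x_{i-1}^q}{T(x_{i-1}^{q-1}+1)}$ has a simple pole, in the appropriate local parameter, at each place of $V$ — this is the key small computation), so each step contributes different exponent $2(q-1)$ at the finitely many ramified places. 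Summing contributions through the tower and invoking Hurwitz, $2g(F_n) - 2 = [F_n:F_0](2g(F_0)-2) + \deg \mathfrak{d}(F_n/F_0)$, and bounding $\deg \mathfrak{d}(F_n/F_0)$ by (number of ramified places in $V$, which is bounded independently of $n$) times $[F_n:F_0]$ times a constant, yields $\frac{g(F_n)-1}{[F_n:F_0]} \le q$. The precise constant is extracted exactly as in \cite{GSJNT}: the bounded ramification locus forces $\deg \mathfrak{d}(F_n/F_0) \le c\,[F_n:F_0]$ for an explicit $c$, and $g(F_0) = 0$ since $F_0 = \FF(x_0)$ is rational.

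The main obstacle I anticipate is the bookkeeping in the inductive ramification argument: one must carefully check that the finite set $V$ is genuinely closed under the tower's self-map at the level of values of $x_j$, including the behavior at $x_0 = \infty$ and at the conjugate roots of $x_0^{q-1}+1$, and one must verify that in each local situation the pole order of the right-hand side (in the correct local uniformizer, possibly after subtracting an $\mathbb{F}_q$-linearized polynomial in $x_i$ to reach the minimal conductor) is exactly $1$ and coprime to $q$ so the Artin--Schreier different exponent formula applies uniformly. Once that local analysis is in place, the genus bound is a formal consequence of Hurwitz and the boundedness of the ramification locus, exactly parallel to \cite{GSJNT,bezerragarcia}, and I would simply cite that those computations ``can be done in exact analogy'' as the paper already indicates, recording only the resulting inequality $\frac{g(F_n)-1}{[F_n:F_0]} \le q$.
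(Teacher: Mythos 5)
Your overall strategy coincides with the paper's (bound the ramification locus by the places over the pole and zeroes of $x_0^q+x_0$, then get the genus bound from a per-step different exponent $2(q-1)$, Hurwitz, and transitivity as in \cite{GSJNT,GSMMJ}), but the inductive step you describe for the ramification locus is logically inverted, and as stated it does not prove the required containment. You propose to check the \emph{forward} implication: if $x_{n-1}$ takes a value in $B:=\{\infty,0\}\cup\{a:a^{q-1}=-1\}$ at a place, then $x_n$ does too. What the induction actually needs is the \emph{backward} implication: ramification in $F_{n+1}/F_n$ occurs only at places $Q$ of $F_n$ with $x_n(Q)\in\{\infty\}\cup\{a:a^{q-1}=-1\}\subseteq B$, and to conclude that such a $Q$ lies over $V$ you must push this condition \emph{down} the tower, showing $x_n(Q)\in B\Rightarrow x_{n-1}(Q\cap F_{n-1})\in B$ and iterating until $x_0(P)\in\{\infty\}\cup\{a:a^q+a=0\}$. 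Knowing that places over $V$ have $x_n$-values in $B$ says nothing about whether places with $x_n$-values in $B$ lie over $V$, so your conclusion ``the next step ramifies only over places already lying over $V$'' does not follow from what you verify; the parenthetical ``(or is unramified over $F_0$)'' is moreover false as an hypothesis for your claimed implication. The fix is easy and is exactly the paper's one-line argument: from the defining equation, $x_n=\infty$ forces $x_{n-1}\in\{\infty\}\cup\{a:a^{q-1}=-1\}$, and $x_n^q+x_n=0$ (i.e.\ $x_n\in\{0\}\cup\{a:a^{q-1}=-1\}$) forces $x_{n-1}=0$; these backward checks, not your forward ones (one of which, ``$x_{n-1}=0$ implies $x_n=0$'', is anyway imprecise since $x_n$ could also be a root of $s^{q-1}+1$), close the induction.

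The genus part of your plan is in the same spirit as the paper, which simply notes that all steps are $2$-bounded and invokes the reasoning of \cite{GSMMJ}; with the ramification locus of degree $q+1$ (the pole of $x_0$ plus the degree-$q$ zero divisor of $x_0^q+x_0$), $g(F_0)=0$, and different exponents at most $2(e-1)$ in every step, Hurwitz gives $(g(F_n)-1)/[F_n:F_0]\le (q+1)-1=q$, as you indicate. Two cautions: the steps are Galois Artin--Schreier only once the constants contain $\mathbb{F}_{q^2}$ (the kernel of $z\mapsto z^q+z$ is not $\mathbb{F}_q$ in odd characteristic), which is harmless since a constant field extension changes neither the genus nor the different; and the claim that after Artin--Schreier reduction the pole orders stay equal to $1$ at \emph{all} places above $V$ in \emph{all} steps is precisely the nontrivial point that the appeal to $2$-boundedness and \cite{GSMMJ} (or the computations of \cite{GSJNT,bezerragarcia}) is meant to cover, so it should either be carried out or cited, as the paper does.
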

\begin{proof}
The only places of $F_0$ that ramify in the extension $F_1/F_0$ are the poles of $x_0^q/(T(x_0^{q-1}+1))$. This means that only $P_\infty$ and $P_\alpha$ where $\alpha^{q-1}+1=0$ ramify in $F_1/F_0$. On the other hand, let $Q$ be a place of $F_n$ which is ramified over $F_0$. Let $P=Q\cap F_0$ and $Q_i=Q_n\cap F_i$. We see that there exists $1\le i \le n$ such that $e(Q_i|Q_{i-1})>1$. This means that either $x_{i-1}$ has a pole in $Q_{i-1}$ or that $x_{i-1} \pmod{Q_{i-1}} \in  \{a \in \mathbb{F}_{q^2} | a^{q-1}+1=0 \}$. Inspecting the defining equations, we see that this implies that either $x_0$ has a pole at $P$ or that $P$ is a zero of $x_0-a$ with $a^q+a=0$.
From the fact that all extensions in the tower $\mathcal F$ are 2-bounded, a very similar reasoning as in \cite{GSMMJ} gives the result.
\end{proof}

\begin{remark}
Performing similar computations as in \cite{GSJNT} one can determine the exact genus of $F_n$. Alternatively, one could use the Riemann--Hurwitz formula and use that $F_n$ is a Kummer covering of the function field of $X_0(T^{n+2})$. It turns out the genus of $F_n$ is given as follows:
$$g(F_n)=
\left\{
\begin{array}{ll}
(q^{(n+1)/2}-1)^2          & \makebox{if $n$ is odd}\\
(q^{(n+2)/2}-1)(q^{n/2}-1) & \makebox{if $n$ is even}
\end{array}
\right.
$$
This is the same as the genera found in \cite{GSJNT}, which is as it should be, since the tower in \cite{GSJNT} is obtained by reducing the tower $\mathcal F$ at the good prime $T-1$.
\end{remark}

To obtain optimal towers, we will again reduce modulo prime elements of $\F[T]$. In fact, we will consider the following towers:

\begin{definition}
Let $L \in \F[T]$ be a prime different from $T$. As before, we write $\alpha \equiv T \pmod{L}$ and interpret it as an element of $\mathbb{F}_{L}$. We denote by $\mathcal{F}^{(L)}=(F_n^{(L)})_{n\ge 0}$ the tower recursively defined by
$F_0^{(L)}=\mathbb{F}_{L}^{(2)}(x_0)$ and $F_i=F_{i-1}(x_i)$, where
\begin{equation}\label{eq:reducedGSjnt}
x_{i}^q+x_i=\frac{x_{i-1}^q}{\alpha(x_{i-1}^{q-1}+1)}.
\end{equation}
\end{definition}

Our goal is to show that the tower $\mathcal F^{(L)}$ is optimal for any prime $L$ different from $T$. Since one readily can show that the genera of the function fields occurring in $\mathcal F^{(L)}$ are the same as those of the corresponding function fields in $\mathcal F$, we can use Proposition \ref{prop:genusunreducedjnt} to estimate the genus. Before investigating the splitting locus of $\mathcal F^{(L)}$ we state a lemma.

\begin{lemma}\label{lem:galoisclosure}
The Galois closure of the extension $\FF(u_1)/\FF(j_1)$ is given by $\FF(x_0)/\FF(j_1)$.
Furthermore, let $L \in \F[T]$ be a prime different from $T$ of degree $d$, then the Galois closure of the extension $\FF_L^{(2)}(u_1)/\FF_L^{(2)}(j_1)$ is given by $\FF_L^{(2)}(x_0)/\FF_L^{(2)}(j_1)$.
\end{lemma}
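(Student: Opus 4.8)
The plan is to unwind the explicit relations between the various uniformizers and then apply standard Galois-theoretic facts about Artin--Schreier and Kummer extensions. Recall from Equation~(\ref{eq:jandjprime}) that $j_1=(u_1+T^q)^{q+1}/u_1^q$, so $\FF(u_1)/\FF(j_1)$ is a degree $q+1$ extension. The key observation is that the relation $\xi_j=x_j^{q-1}$ together with Equation~(\ref{eq:correspondence}) expresses $u_1$ in terms of $x_0$; more precisely, from $u_1=-T\xi_0^q/(\xi_0+1)^{q-1}$ and $\xi_0=x_0^{q-1}$ we get $u_1$ as an explicit rational function of $x_0$. I would then compute directly, using the defining equation $x_0^q+x_0 = x_0^q/(T(x_0^{q-1}+1)) \cdot \text{(shift)}$—actually more simply, I would express $j_1$ directly in terms of $x_0$ and check that $\FF(x_0)/\FF(j_1)$ has degree $q^2-1$.

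First I would establish the tower of fields $\FF(j_1) \subseteq \FF(u_1) \subseteq \FF(x_0)$. The inclusion $\FF(u_1)\subseteq\FF(x_0)$ holds because $u_1$ is a rational function of $x_0$ (via $\xi_0 = x_0^{q-1}$ and Equation~(\ref{eq:correspondence})); the extension $\FF(x_0)/\FF(u_1)$ is Kummer of degree dividing $q-1$, and a degree count (since $[\FF(x_0):\FF(j_1)]$ should be $q^2-1 = (q-1)(q+1)$, which I would verify by writing $j_1$ explicitly as a function of $x_0$ of degree $q^2-1$) shows it has degree exactly $q-1$. Next I would show $\FF(x_0)/\FF(j_1)$ is Galois of degree $q^2-1$. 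The cleanest route: identify the relevant $j$-invariant description. In fact $j_1 = (u_1+T^q)^{q+1}/u_1^q$ and a short manipulation using $u_1 = -T x_0^{q(q-1)}/(x_0^{q-1}+1)^{q-1}$ should reveal that $j_1$ is invariant under $x_0\mapsto \zeta x_0$ for $\zeta\in\mathbb{F}_q^*$ and under one further automorphism of order $q+1$, with these generating a group of order $q^2-1$ acting on $\FF(x_0)$; since $|G| = [\FF(x_0):\FF(j_1)]$, the extension is Galois with group $G$. Then $\FF(x_0)$ is a Galois extension of $\FF(j_1)$ containing $\FF(u_1)$, hence it contains the Galois closure; and minimality follows because no proper subfield between $\FF(u_1)$ and $\FF(x_0)$ is normal over $\FF(j_1)$ (the subgroup fixing $\FF(u_1)$ has trivial core in $G$—this is the point to check, essentially that $\mathbb{F}_q^*$ is not normalized to a smaller group, or more directly that the $q+1$ conjugates of $u_1$ generate $\FF(x_0)$).

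The main obstacle I anticipate is pinning down the Galois group $G$ and verifying the core-triviality that gives minimality of the closure—equivalently, showing that the intermediate field $\FF(u_1)$ corresponds to a self-normalizing-enough subgroup so that its normal core is trivial. Concretely I would compute the full set of conjugates of $u_1$ over $\FF(j_1)$ (there are $q+1$ of them, the roots of $\Phi_T(j_1,\cdot)$ re-expressed in the $u$-coordinate, or the solutions of $(u_1+T^q)^{q+1}/u_1^q = j_1$) and check that these $q+1$ elements generate $\FF(x_0)$, not merely $\FF(u_1)$. This is where an explicit low-degree computation is unavoidable, but it is mechanical. For the statement over $\FF_L^{(2)}$, the argument is verbatim the same: all the rational-function identities have coefficients in $\F[T]$ and reduce well modulo any prime $L\neq T$ (the only denominators involve powers of $T$, which is a unit mod $L$), the degree counts are preserved since $\mathrm{char}$ does not divide $q^2-1$ in the relevant sense—more precisely the $(q-1)$-st roots of unity and $(q+1)$-st roots of unity behavior is unchanged—and $\FF_L^{(2)}$ already contains $\mathbb{F}_{q^2}\supseteq\mathbb{F}_q$ so no further constant field extension is forced. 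Hence the same group $G$ acts and the same minimality argument applies, giving $\FF_L^{(2)}(x_0)/\FF_L^{(2)}(j_1)$ as the Galois closure.
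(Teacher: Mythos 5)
Your degree bookkeeping fails at the decisive point, and the error propagates through the whole strategy. A small slip first: in terms of $u_1$ the relation is $j_1=(u_1+T)^{q+1}/u_1$ (the $j_0$--$u_0$ relation shifted one level up), not $j_1=(u_1+T^q)^{q+1}/u_1^q$; both have degree $q+1$, so $[\FF(u_1):\FF(j_1)]=q+1$ still holds. The serious error is the claim that $\FF(x_0)/\FF(u_1)$ is Kummer of degree dividing $q-1$, whence $[\FF(x_0):\FF(j_1)]=q^2-1$. You have conflated $\FF(u_1)$ with $\FF(\xi_0)=\FF(u_0,u_1)$: it is $\FF(x_0)/\FF(\xi_0)$ that is Kummer of degree $q-1$ (via $\xi_0=x_0^{q-1}$), whereas $\FF(\xi_0)/\FF(u_1)$ has degree $q$, since by Equation (\ref{eq:correspondence}) $u_1=-T\xi_0^q/(\xi_0+1)^{q-1}$ is a rational function of degree $q$ in $\xi_0$. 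Hence $u_1=-Tx_0^{q(q-1)}/(x_0^{q-1}+1)^{q-1}$ has degree $q(q-1)$ in $x_0$, and $[\FF(x_0):\FF(j_1)]=(q+1)\,q\,(q-1)=q^3-q$, not $q^2-1$. Consequently your proposed Galois group of order $q^2-1$, generated by the scalings $x_0\mapsto\zeta x_0$ and one element of order $q+1$, cannot be the Galois group: the extension contains a degree-$q$, wildly ramified step, so the group has order divisible by $q$ (in fact by $p$) and is non-abelian; the tame/Kummer picture and the remark that the characteristic does not divide $q^2-1$ are beside the point. The "mechanical" verification you defer (that the conjugates of $u_1$ generate a field of your predicted degree) would simply expose this contradiction.

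The correct count is exactly what the paper exploits: writing $j_1$ in terms of $x_0$ gives $j_1=-T^q(x_0^{q^2}-x_0)^{q+1}/(x_0^q+x_0)^{q^2+1}$, a function of degree $q^3-q$ which is the standard invariant of a $\PGL(2,\F)$-action on $\FF(x_0)$ (with $x^q+x$ playing the role of $x^q-x$), so $\FF(x_0)/\FF(j_1)$ is Galois with group $\PGL(2,\F)$. For minimality the paper rewrites $j_1=(u_1+T)^{q+1}/u_1$ as the projective polynomial $(u_1+T)^{q+1}-j_1(u_1+T)+j_1T=0$ and invokes Abhyankar to conclude that the Galois closure of $\FF(u_1)/\FF(j_1)$ already has group $\PGL(2,\F)$, i.e.\ degree $q^3-q$; since $\FF(u_1)\subseteq\FF(x_0)$ and $\FF(x_0)/\FF(j_1)$ is Galois of that same degree, it is the closure. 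Your core-triviality idea could replace the appeal to Abhyankar ($\FF(u_1)$ is the fixed field of a point stabilizer of the faithful action of $\PGL(2,\F)$ on $\proj^1(\F)$, which has trivial core), but only after the group has been identified as $\PGL(2,\F)$, not a group of order $q^2-1$. The reduction modulo $L\neq T$ is indeed harmless (only powers of $T$ occur in denominators), but as written your argument for it inherits the same wrong degrees.
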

\begin{proof}
In Section \ref{section:two} and the beginning of Section \ref{section:four}, the variables $j_1$ and $x_0$ were connected explicitly with each other in the following way:
\begin{equation}\label{eq:equations}
j_1=\dfrac{(u_1+T)^{q+1}}{u_1} \makebox{, } u_1=\frac{(v_0+T)^q}{v_0^{q-1}} \makebox{ and } v_0=-T(x_0^{q-1}+1).
\end{equation}
The first equation can be rewritten as $(u_1+T)^{q+1}-j_1(u_1+T)+j_1T$. Then it follows from \cite{abhyankhar,abhyankhar2} that the Galois closure of $\FF(u_1)/\FF(j_1)$ has Galois group $\PGL(2,\F)$ over $\FF(j_1)$. In order to show that the extension $\FF(u_1)/\FF(j_1)$ is Galois with Galois group $\PGL(2,\F)$, we write $j_1$ in terms of $x_0$ using Equation (\ref{eq:equations}). The result is $$j_1=-T^q\frac{(x_0^{q^2}-x_0)^{q+1}}{(x_0^q+x_0)^{q^2+1}}.$$
It now follows that the extension $\FF(x_0)/\FF(j_1)$ is a Galois extension with Galois group $\PGL(2,\F)$. The second part of the lemma can be shown in exactly the same way.
\end{proof}

Now we can show the main result of this section.

\begin{theorem}\label{thm:tower2}
The tower $\mathcal F^{(L)}$ is asymptotically optimal.
\end{theorem}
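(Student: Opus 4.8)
The plan is to deduce optimality of $\mathcal F^{(L)}$ over $\mathbb F_L^{(2)}$ by combining the genus bound already in hand with a splitting-rate bound, exactly as in the standard recipe: if $\liminf g(F_n^{(L)})/[F_n^{(L)}:F_0^{(L)}]=:\gamma$ and $\liminf N(F_n^{(L)})/[F_n^{(L)}:F_0^{(L)}]=:\nu$, then $\lambda(\mathcal F^{(L)})=\nu/\gamma$, and we want this to equal $q-1$. Proposition~\ref{prop:genusunreducedjnt} (which holds after reduction, since the genera are unchanged) already gives $\gamma\le q$; since $\deg L$ is finite this bound is genuinely $\gamma=q$ in the limit up to lower-order terms, so it suffices to produce $\nu\ge q(q-1)$, i.e.\ asymptotically $q(q-1)$ completely split places at each level relative to the degree of the covering.

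The key point is to transfer the splitting information we already proved for $\mathcal E^{(L)}$ up to its supertower $\mathcal F^{(L)}$. Recall $\xi_j=x_j^{q-1}$, so $F_n^{(L)}=E_n^{(L)}(x_0)$ is a Kummer extension of degree $q-1$ over $E_n^{(L)}$, and by Corollary~\ref{cor:splitting} together with Theorem~\ref{thm:supersingular} (and the strengthening announced just before Corollary~\ref{cor:splitting2}) the zeros of $p_d^{(L)}(\xi_0(\xi_0+1)^{q-1})$ in $E_0^{(L)}$ are rational, split completely in $\mathcal E^{(L)}$, and — crucially — each such value of $\xi_0$ is a $(q-1)$-st power in $\mathbb F_L^{(2)}$. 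First I would check that these splitting places of $\mathcal E^{(L)}$ are disjoint from the ramification locus of the extension $F_n^{(L)}/E_n^{(L)}$: the ramification of $x_0$ over $\xi_0=x_0^{q-1}$ sits over $\xi_0=0$ and $\xi_0=\infty$, and $p_d^{(L)}(0)\neq 0$ (Proposition~\ref{prop:simple}) while the places in question are finite, so there is no interference. Then, because $\xi_0(Q)$ is a nonzero $(q-1)$-st power in the (rational) residue field, the equation $x_0^{q-1}=\xi_0$ splits into $q-1$ rational places of $F_0^{(L)}$ above each such $Q$; and since these places are unramified in $F_n^{(L)}/E_n^{(L)}$ and $Q$ splits completely in $E_n^{(L)}/E_0^{(L)}$, each of the $q-1$ chosen places of $F_0^{(L)}$ splits completely in $\mathcal F^{(L)}$. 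Counting: Corollary~\ref{cor:splitting} gives $\deg p_d^{(L)}(s(s+1)^{q-1})=q(q^d-1)/(q-1)$ split places of $E_0^{(L)}$, each rational, each producing $q-1$ split places of $F_0^{(L)}$, for a total of $q(q^d-1)$ completely split places in $\mathcal F^{(L)}$. Dividing by $[F_n^{(L)}:F_0^{(L)}]$ and using that all these places split completely, one gets $\nu\ge q(q^d-1)$, which compared to $\gamma=q$ (up to $o(1)$) already exceeds $q-1$ once $d\ge 1$; combined with the Drinfeld--Vladut bound $\lambda\le q-1$ over $\mathbb F_L^{(2)}$ this forces $\lambda(\mathcal F^{(L)})=q-1$, i.e.\ optimality.

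An alternative, cleaner route — and the one I would actually prefer to write up — is to identify $\mathcal F^{(L)}$ as (a piece of) the Galois closure of $\mathcal E^{(L)}$: Lemma~\ref{lem:galoisclosure} says $\mathbb F_L^{(2)}(x_0)/\mathbb F_L^{(2)}(j_1)$ is Galois with group $\mathrm{PGL}(2,\mathbb F_q)$, and the composite tower $\mathcal F^{(L)}=\mathcal E^{(L)}\cdot\mathbb F_L^{(2)}(x_0)$ is then a finite subextension of the Galois closure of $\mathcal E^{(L)}$. Invoking the standard fact that a finite subextension of the Galois closure of an asymptotically optimal tower is again asymptotically optimal (the limit of a subtower of the Galois closure is bounded below in terms of the limit of the original tower, and the Drinfeld--Vladut bound caps it from above), optimality of $\mathcal F^{(L)}$ follows immediately from Corollary~\ref{cor:splitting2}. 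The main obstacle in either approach is bookkeeping rather than an honest difficulty: one must make sure the completely-split places carry over to $F_n^{(L)}$ with the full multiplicity $q-1$, which is precisely why the $(q-1)$-st power statement (proved via the modular interpretation in the last section) is needed, and one must confirm the genus bound $\gamma\le q$ is not degraded by the reduction — both of which are already supplied by the results quoted above.
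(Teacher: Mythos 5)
Your overall frame (genus bound $\gamma\le q$ from Proposition~\ref{prop:genusunreducedjnt}, plus roughly $q(q^d-1)$ completely split places, plus the Drinfeld--Vladut bound) is the same as the paper's, and the transfer of complete splitting from $\mathcal E^{(L)}$ to the compositum $F_n^{(L)}=E_n^{(L)}F_0^{(L)}$ is fine. But both of your routes sidestep the actual crux of the theorem, namely the proof that the split places survive \emph{rationally} at the $F_0$-level, i.e.\ that every splitting value of $\xi_0$ (equivalently every root of $p_d^{(L)}(s(s+1)^{q-1})$) is a $(q-1)$-st power in $\mathbb{F}_L^{(2)}$. In your first route you cite ``the strengthening announced just before Corollary~\ref{cor:splitting2}''; but in the paper that statement is only \emph{announced} there, with a forward reference to the last section, where it is obtained as a consequence of the proof of this very theorem. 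Corollary~\ref{cor:splitting2} and Theorem~\ref{thm:supersingular} give rationality of the $\xi_0$-values, not of their $(q-1)$-st roots, so invoking the announced strengthening is circular. In your second route you appeal to a ``standard fact'' that a finite subextension of the Galois closure of an asymptotically optimal tower is again optimal; this is not a standard fact and you give no argument (Galois closures can in general lose optimality through genus growth, and your splitting data for $\mathcal E^{(L)}$ is over $E_0^{(L)}$, not over the smaller base $\mathbb{F}_L^{(2)}(j_1)$ over which the closure is formed). What the paper actually extracts from Lemma~\ref{lem:galoisclosure} is a targeted statement: a supersingular place of the $u_1$-line splits completely in $\mathbb{F}_L^{(2)}(\xi_0)/\mathbb{F}_L^{(2)}(u_1)$ (this is Corollary~\ref{cor:splitting2} via the key identity of Theorem~\ref{thm:keyresult}), and since a place splitting completely in an extension splits completely in its Galois closure, it splits completely in $\mathbb{F}_L^{(2)}(x_0)/\mathbb{F}_L^{(2)}(u_1)$, which by the lemma is that Galois closure; the rationality of the resulting $x_0$-places is exactly the $(q-1)$-st power statement you need. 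Any correct write-up must contain this (or an equivalent) argument; neither of your routes does, so the ``main obstacle is bookkeeping'' assessment misplaces where the content of the theorem lies.

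There is also a numerical slip that makes your concluding step incoherent for $d>1$: the Drinfeld--Vladut bound over $\mathbb{F}_L^{(2)}=\mathbb{F}_{q^{2d}}$ is $q^d-1$, not $q-1$. Your own (correct) counts $\nu\ge q(q^d-1)$ and $\gamma\le q$ give a limit at least $q^d-1$, which would \emph{contradict} a cap of $q-1$ when $d>1$; the right conclusion is that the limit equals $q^d-1$, which is optimality over $\mathbb{F}_L^{(2)}$. Relatedly, your assertion that $\gamma$ equals $q$ exactly is neither needed nor justified; the inequality $\gamma\le q$ from Proposition~\ref{prop:genusunreducedjnt} (together with the remark that reduction does not change the genera) suffices.
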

\begin{proof}
All that remains to be proved is that the polynomial $p_d^{(L)}(t^q+t)$ splits over $\mathbb{F}_{L}^{(2)}$. Let $\rho$ be a root of this polynomial in some extension field of $\F$ and write $\sigma=\rho^{q-1}$. We already know from Corollary \ref{cor:splitting2} that $Q_\sigma$, the zero of $u_1-\sigma$ in $\mathbb{F}_{L}^{(2)}(u_1)$, is rational and that it splits in the extension $\mathbb{F}_{L}^{(2)}(x_0^{q-1})/\mathbb{F}_{L}^{(2)}(u_1)$. However, from Lemma \ref{lem:galoisclosure} it follows that the extension $\mathbb{F}_{L}^{(2)}(x_0)/\mathbb{F}_{L}^{(2)}(u_1)$ is the Galois closure of the extension $\mathbb{F}_{L}^{(2)}(x_0^{q-1})/\mathbb{F}_{L}^{(2)}(u_1)$. Therefore, the place $Q_\sigma$ splits completely in $\mathbb{F}_{L}^{(2)}(x_0)/\mathbb{F}_{L}^{(2)}(u_1)$. This implies that $\rho \in \mathbb{F}_{L}^{(2)}$ as desired.
\end{proof}

Note that the proof of above theorem implies the roots of the polynomial $p_d^{(L)}(s)$ are $(q-1)$-st powers in $\FF_L^{(2)}$. The same is true for the roots of $p_d^{(L)}(s(s+1)^{q-1})$.

\bibliographystyle{99}

\end{document}